\documentclass[times, doublespace]{amsart}
\usepackage{amsmath, amssymb, amsthm,verbatim}
\usepackage[foot]{amsaddr}

\pagestyle{plain}

\numberwithin{equation}{section}

\sloppy
\usepackage{setspace}
\usepackage[parfill]{parskip}

\usepackage[margin=1.5 in]{geometry}
\usepackage{tikz-cd}

\theoremstyle{plain}
\newtheorem{theorem}[equation]{Theorem}
\newtheorem{proposition}[equation]{Proposition}
\newtheorem{lemma}[equation]{Lemma}
\newtheorem{cor}[equation]{Corollary}

\theoremstyle{definition}
\newtheorem{definition}[equation]{Definition}

\newtheorem*{remark}{Remark}

\DeclareMathOperator{\Spec}{Spec}
\DeclareMathOperator{\Pic}{Pic}
\DeclareMathOperator{\cPic}{\mathbf{Pic}}
\DeclareMathOperator{\NS}{NS}

\DeclareMathOperator{\Jac}{Jac}
\DeclareMathOperator{\Prep}{Prep}

\DeclareMathOperator{\Alb}{Alb}

\DeclareMathOperator{\PGL}{PGL}

\DeclareMathOperator{\NT}{NT}
\DeclareMathOperator{\an}{an}

\renewcommand{\Im}{\operatorname{Im}}
\renewcommand{\div}{\text{div}}
\newcommand{\Tr}{\operatorname{Tr}}
\newcommand{\isom}{\xrightarrow{~\sim~}}
\newcommand{\hooklongrightarrow}{\lhook\joinrel\longrightarrow}

\newcommand{\imP}[1]{\Tr_{K/k}\left(\Pic^0(#1)\right)}

\newcommand{\trP}[1]{\operatorname{Pic}_{tr}(#1)}
\newcommand{\trPo}[1]{\operatorname{Pic}^0_{tr}(#1)}
\newcommand{\htau}{\widehat\tau}

\newcommand{\semipositive}{nef}

\newcommand{\C}{\mathbb{C}}
\newcommand{\R}{\mathbb{R}}
\newcommand{\Z}{\mathbb{Z}}

\newcommand{\Q}{\mathbb{Q}}
\newcommand{\F}{\mathbb{F}}
\newcommand{\A}{\mathbb{A}}
\newcommand{\cO}{\mathcal{O}}
\newcommand{\cX}{\mathcal{X}}
\newcommand{\cU}{\mathcal{U}}
\newcommand{\cY}{\mathcal{Y}}
\newcommand{\cM}{\mathcal{M}}
\newcommand{\calL}{\mathcal{L}}

\renewcommand{\P}{\mathbb{P}}

%#######################################################################
%#######################################################################

\begin{document}

\title{The arithmetic Hodge Index Theorem and rigidity of dynamical systems over function fields}

\author{Alexander Carney}

\date{\today}

\begin{abstract}
In one of the fundamental results of Arakelov's arithmetic intersection theory, Faltings and Hriljac (independently) proved the Hodge Index Theorem for arithmetic surfaces by relating the intersection pairing to the negative of the N\'eron-Tate height pairing. More recently, Moriwaki and Yuan--Zhang generalized this to higher dimension. In this work, we extend these results to projective varieties over transcendence degree one function fields. The new challenge is dealing with non-constant but numerically trivial line bundles coming from the constant field via Chow's $K/k$-trace functor. As an application of the Hodge Index Theorem, we also prove a rigidity theorem for the set of canonical height zero points of polarized algebraic dynamical systems over function fields. For function fields over finite fields, this gives a rigidity theorem for preperiodic points, generalizing previous work of Mimar, Baker--DeMarco, and Yuan--Zhang.
\end{abstract}

\maketitle

\section{Introduction}\label{intro}
The Hodge Index Theorem states classically that the divisor intersection pairing on an algebraic surface has signature $+1,-1,\dots,-1$. The corresponding result for line bundles on arithmetic surfaces, i.e. relative curves over the ring of integers of a number field, was proven independently by Faltings \cite{faltings} and Hriljac \cite{hriljac}, and is a fundamental result in Arakelov Theory. More recently, Moriwaki \cite{Moriwaki} extended this to higher dimensional arithmetic varieties, and Yuan and Zhang \cite{yz} proved a hodge index theorem for adelic metrized line bundles over $\overline{\Q}$.

In their work, Yuan and Zhang also conjectured that a similar result should hold over function fields. Here we prove their conjecture. Our theorem statement differs slightly from their conjecture, however, so as to be stated more directly and to avoid reliance on a non-canonical isogeny between Chow's function field $K/k$-trace and $K/k$-image. 

 Let $k$ be an algebraically closed field of arbitrary characteristic, and let $K=k(B)$ be the function field of $B$, a smooth, projective curve over $k$. Let $\pi:X\to\Spec (K)$ be a geometrically normal, geometrically integral, projective variety of dimension $n\ge1$. We will consider the group $\widehat{\Pic}(X)$ of adelic metrized line bundles on $X$ in the sense of \cite{Z95}; definitions will be recalled in Section \ref{adelicsubsection}.  Since an adelic metric can be specified, for example, by a line bundle on a model $\cX\to B$ of $X$, this setting also covers relative varieties fibered over $B$, in the same way that Yuan and Zhang's work over number fields encompasses Arakelov's setting of arithmetic varieties over the spectrum of the ring of integers of a number field. 
 
 Chow's $K/k$-trace functor $\Tr_{K/k}$ identifies the part of the Picard variety of $X$ which is defined over $k$, and the line bundles in $\Tr_{K/k}(\Pic^0(X))$ can all be given adelic metrics in a well-defined canonical way using isotrivial models over $B$. This construction is detailed in Section~\ref{tracesubsection}. Let $\Pic^{\tau}(X)$ be the numerically trivial subgroup of $\Pic(X)$. We prove the following result, with more detailed versions stated in Section~\ref{hitproof}:
 \begin{theorem}
 Let$M,N\in\Pic^{\tau}(X)$, and let $L_1,\dots L_{n-1}\in\Pic(X)$ be ample. There exist canonical metrics on $M$ and $N$ so that 
  \[\langle M,N\rangle_{L_1,\dots,L_{n-1}}:=\overline M\cdot\overline N\cdot\overline L_1\cdots\overline L_{n-1}\]
 is a well-defined bilinear pairing, independent of the choice of metrics on $L_1,\dots, L_{n-1}$. This extends to a symmetric $\R$-bilinear form on $\Pic^{\tau}(X)\otimes_{\Z}\R$ which is negative semidefinite with kernel 
\[ \Tr_{K/k}(\Pic^0(X))\otimes_{\Z}\R.\]
 \end{theorem}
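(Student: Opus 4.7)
The plan is to prove the theorem in three stages: construct canonical metrics on $\Pic^\tau(X)$; verify that the pairing is well-defined, bilinear, and symmetric; and establish negative semidefiniteness together with the claimed kernel. For the first stage, since $\Pic^\tau(X)/\Pic^0(X)$ is finite, it suffices to construct canonical metrics on $\Pic^0(X)$ and then take $N$-th roots to extend to $\Pic^\tau(X)\otimes_\Z\R$. On $\Pic^0(X)$ I would decompose, up to isogeny, into the trace part $\Tr_{K/k}(\Pic^0(X))$---where Section~\ref{tracesubsection} supplies canonical metrics from isotrivial models over $B$---and a complement whose image in the Picard variety has trivial $K/k$-trace. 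On the complement I would apply Tate's limiting procedure $\lim_N N^{-2}[N]^*\overline{M}_0$ at each place of $K$, starting from any adelic model $\overline{M}_0$; this yields a canonical admissible metric independent of the starting point and, crucially, of the chosen complement.

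Next I would verify well-definedness and bilinearity. If the metric on an $L_i$ is changed by continuous functions $\phi_{i,v}$ at each place, the resulting perturbation of the intersection number is an integral of the $\phi_{i,v}$ against the current $c_1(\overline M)\wedge c_1(\overline N)\wedge\bigwedge_{j\neq i}c_1(\overline L_j)$. For the trace contribution this integral vanishes via the projection formula applied to the isotrivial structure, and for the non-trace contribution it vanishes in the limit by the $N^{-2}[N]^*$ invariance of the canonical metric. Bilinearity and symmetry then follow from multilinearity and symmetry of the intersection product, and the extension to $\Pic^\tau(X)\otimes_\Z\R$ is formal.

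For the semidefiniteness and kernel I would induct on $n$, intersecting with sufficiently positive generic sections of $L_1,\dots,L_{n-1}$ to reduce to the case of a relative surface, i.e.\ $X$ of dimension one over $K$. In that base case I would invoke the function-field analog of the Faltings--Hriljac theorem, which identifies $\overline M\cdot\overline N$ with minus the N\'eron--Tate height pairing on $\Jac(X)(K)$ under the standard identification of $\Pic^0(X)$ with $\Jac(X)(K)$. The Lang--N\'eron theorem then guarantees that this pairing is positive definite on $\bigl(\Jac(X)(K)/\Tr_{K/k}(\Jac(X))(k)\bigr)\otimes\R$, which yields negative semidefiniteness on $X$ with the claimed kernel. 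Some care is needed during the Bertini reduction to ensure that numerical triviality and the trace are both preserved under restriction to generic hyperplane sections.

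The main obstacle is the compatibility between the two sources of canonical metrics. One must show that the isotrivial-model metrics from Section~\ref{tracesubsection} agree with the output of the limiting construction on the trace part, so that trace classes contribute zero to the pairing while non-trace numerically trivial classes contribute exactly the N\'eron--Tate pairing up to sign. A secondary difficulty is that, as emphasized in the introduction, no canonical complement to the trace exists inside $\Pic^\tau(X)$; all intermediate statements must descend to the quotient $\Pic^\tau(X)/\Tr_{K/k}(\Pic^0(X))$ up to isogeny, and working in $\Pic^\tau(X)\otimes_\Z\R$ finesses this at the cost of extra bookkeeping in positive characteristic where the trace functor is less clean.
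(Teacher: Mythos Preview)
Your overall architecture---base case on curves via Faltings--Hriljac plus Lang--N\'eron, then Bertini induction on $\dim X$---matches the paper. But there is a genuine gap in the kernel computation, and your metric construction is more complicated than necessary.

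\textbf{The gap.} For the equality case you write that ``some care is needed during the Bertini reduction to ensure that numerical triviality and the trace are both preserved under restriction.'' Preservation under restriction is the easy direction. The hard direction, which you do not address, is the \emph{lifting}: having cut down to $Y\subset X$ and concluded by induction that $\overline M|_Y\in\Tr_{K/k}(\Pic^0(Y))_\R$, you must deduce that $\overline M$ itself lies in $\Tr_{K/k}(\Pic^0(X))_\R$. This is not formal. The paper isolates it as a separate lemma (Lemma~\ref{traceint}): for a morphism $f:A\to B$ of abelian varieties over $K$, one has $f(A(K))_\Q\cap\tau_B(\Tr(B)(k))_\Q=(f\circ\tau_A)(\Tr(A)(k))_\Q$. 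Applied to the restriction $\Pic^0(X)\to\Pic^0(Y)$, this is exactly what allows the induction to close. The proof uses Poincar\'e reducibility and a splitting of each abelian variety into an isotrivial part and a part with trivial trace; without something like this, your induction for the kernel stalls after the first step.

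\textbf{The metric construction.} You propose splitting $\Pic^0(X)$ (up to isogeny) into trace plus complement and metrizing each piece separately, then worry about compatibility of the two constructions and the non-canonicity of the complement. The paper avoids all of this by using \emph{flat} metrics uniformly: every numerically trivial $M$ admits a metric with $c_1(\overline M)=0$ as a measure on every curve (constructed via the Albanese map and the admissible metric on $\Alb(X)$), unique up to constants. Independence of the metrics on the $L_i$ is then immediate, since the relevant Chambert-Loir measure $c_1(\overline M)c_1(\overline L_1)\cdots c_1(\overline L_{n-1})$ vanishes identically (Lemma~\ref{flatzeromeasure}); no projection formula or limiting argument is needed. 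The isotrivial-model metrics on the trace are automatically flat, so compatibility is free. Incidentally, your Tate limit has the wrong normalization: for $M\in\Pic^0$ one has $[N]^*M\cong N\cdot M$, so the convergent sequence is $N^{-1}[N]^*\overline M_0$, not $N^{-2}[N]^*\overline M_0$; the paper uses $[2]^*\overline M\cong 2\overline M$.
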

 If one removes the function field trace (so that the kernel is trivial), this is the same result that Yuan--Zhang prove for number fields. It is straightforward to see that $\Tr_{K/k}(\Pic^0(X))\otimes_{\Z}\R$ is in the kernel. Thus, the main new difficulty is showing that numerically trivial adelic metrized line bundles which are non-constant must all come from isotrivial subgroups of the Picard group of $X$. In essence, all arguments of the proof must commute with the $K/k$-trace functor.
 
 \subsection{Arithmetic Dynamics}
Again let $X$ be a projective variety over a function field $K$. A polarized dynamical system $(f,L,q)$ is an endomorphism $f:X\to X$ along with an ample line bundle $L\in\Pic(X)$ such that $f^*L\cong L^{\otimes q}$ for some $q>1$. 
The set of preperiodic points of $f$ is defined as
\[\Prep(f):=\{x\in X(\overline K)|x\text{ has a finite forward orbit under }f\}.\]
Call and Silverman~\cite{CS} show that such a polarized endomorphism defines a canonical Weil height $\widehat h_f$. Here we show that $L$ can be given an \emph{admissible} metric $\overline L_f$ so that the height $h_{\overline L_f}$ defined by $\overline L_f$ via arithmetic intersections agrees with $\widehat h_f$ on $X(\overline K)$. The advantage to our definition is that $h_{\overline L_f}$ defines not only heights of points, but heights of subvarieties of $X$ as well. By applying the Hodge Index Theorem to compare the canonical heights defined by two different polarized dynamical systems, we prove the following rigidity theorem:

\begin{theorem}\label{introdynamicsthm}
Let $X$ be a projective variety defined over a transcendence degree one function field $K$ over any base $k$, and let $(f,L,q)$ and $(g,M,r)$ be two polarized dynamical systems on $X$. If the points with height zero under $h_{\overline L_f}$ and the points with height zero under $h_{\overline M_g}$ agree on a Zariski dense subset of $X(\overline K)$, then they are identical.
\end{theorem}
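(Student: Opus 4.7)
My plan follows the Yuan-Zhang and Baker-DeMarco strategy for canonical-height rigidity, using the arithmetic Hodge Index Theorem of this paper to control the $K/k$-trace contribution, which is the novel feature of the function field setting. First, I would construct the canonical admissible metrics $\overline{L}_f$ and $\overline{M}_g$ via Tate's limit procedure: starting from any initial adelic metric $\overline{L}_0$ on $L$ (say coming from a model), the sequence $q^{-k}(f^k)^*\overline{L}_0$ converges to a semipositive adelic metric satisfying $f^*\overline{L}_f = q\overline{L}_f$, and similarly one obtains $\overline{M}_g$. Applying the projection formula to the functional equation, together with $\deg f = q^n$ and $q>1$, gives the automatic vanishing $\overline{L}_f^{\,n+1} = 0 = \overline{M}_g^{\,n+1}$.

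Next, I would extract mixed intersection vanishings from the density hypothesis. For every $t>0$ the adelic line bundle $\overline{L}_f + t\overline{M}_g$ is semipositive with height vanishing on the Zariski dense common zero locus, so Zhang's arithmetic successive-minima inequality yields $(\overline{L}_f + t\overline{M}_g)^{n+1} \le 0$. Each mixed term $\overline{L}_f^{\,n+1-i}\cdot\overline{M}_g^{\,i}$ is nonnegative by semipositivity, so the polynomial in $t$ has nonnegative coefficients but is nonpositive for all $t>0$, forcing
\[
\overline{L}_f^{\,n+1-i}\cdot\overline{M}_g^{\,i} = 0 \quad\text{for all } 0\le i\le n+1.
\]

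The heart of the argument is a Calabi-type rigidity step: these mixed vanishings, fed into the arithmetic Hodge Index Theorem, force the semipositive adelic metrics $\overline{L}_f$ and $\overline{M}_g$ to have proportional canonical measures at every place of $K$, and hence to agree up to a $\Tr_{K/k}(\Pic^0(X))$-ambiguity coming from the kernel of the bilinear form of the main theorem. Since the $f^*$- and $g^*$-structures act on the constant trace abelian scheme by multiplication by $q$ and $r>1$, any canonical-metric trace contribution necessarily induces the zero height on $X(\overline{K})$; hence the normalized heights $h_{\overline{L}_f}/L^n$ and $h_{\overline{M}_g}/M^n$ agree identically on $X(\overline K)$, and the two zero-height loci coincide.

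The main obstacle I anticipate is the passage from the mixed intersection vanishings to proportionality of the metrics. Since $L$ and $M$ are both ample and typically linearly independent in $\Pic(X)\otimes_\Z\R$, no naive linear combination lands in $\Pic^\tau(X)$, so the Hodge Index Theorem cannot be applied directly to a difference; instead one must work through a Calabi-Yau-type corollary for semipositive adelic line bundles. A subtlety specific to the function field setting, absent in the Yuan-Zhang number field work, is that the kernel $\Tr_{K/k}(\Pic^0(X))\otimes_\Z\R$ of the pairing is typically nontrivial, so extra care is required to verify that the residual trace ambiguity in the comparison contributes zero height on $X(\overline K)$ rather than producing spurious zero-height points in one system but not the other.
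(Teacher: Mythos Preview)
Your outline up through the mixed intersection vanishings is exactly what the paper does: pick nef admissible metrics, use Zhang's successive minima on the sum to deduce $(\overline L_f+\overline L_g)^{n+1}=0$, and then expand. The genuine gap is precisely the one you flagged as ``the main obstacle,'' and you have not actually resolved it. Invoking an unspecified ``Calabi--Yau-type corollary for semipositive adelic line bundles'' is not a proof step; the arithmetic Hodge Index Theorem here only controls $\overline M^2\cdot\overline L_1\cdots\overline L_{n-1}$ when $M$ is numerically trivial on $X$, and as you note, no nontrivial $\R$-linear combination of the original polarizations $L$ and $M$ need land in $\Pic^\tau(X)$. So from the mixed vanishings alone you cannot reach the equality case of the theorem, and your conclusion that the normalized heights agree identically does not follow.

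The paper's fix is to \emph{manufacture} two ample line bundles with the same N\'eron--Severi class before comparing. It builds $f^*$- and $g^*$-equivariant sections $\ell_f,\ell_g:\NS(X)_\Q\to\trP{X}_\Q$ (using that $f^*$ has eigenvalues of absolute value $q^{1/2}$ on $\trPo{X}$ and $q$ on $\NS(X)$), takes the single class $\xi=[L]\in\NS(X)_\Q$, and lifts it two ways to $L_f:=\ell_f(\xi)$ and $L_g:=\ell_g(\xi)$. Then $L_f-L_g\in\Pic^\tau(X)_\Q$ by construction, so after the mixed vanishings one has
\[
(\overline L_f-\overline L_g)^2\cdot(\overline L_f+\overline L_g)^{n-1}=0,
\]
and the Hodge Index Theorem (after a small vertical perturbation to make $\overline L_f+\overline L_g$ arithmetically positive) gives $\overline L_f-\overline L_g\in\pi^*\widehat\Pic(K)_\Q+\imP{X}_\Q$. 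Theorem~\ref{numericallytrivial} then says $h_{\overline L_f}-h_{\overline L_g}$ is constant, hence zero on the dense common zero set, hence identically zero. The remaining point---that $Z_g$ computed with the polarization $L_g$ agrees with $Z_g$ computed with the original $M$---is handled by a separate lemma showing that the height-zero locus of a polarized system is independent of the polarization (since $L$ ample implies $cL-M$ ample for some $c$, giving $0\le h_{\overline M_g}\le c\,h_{\overline L_g}$). You would need both of these ingredients to close the gap.
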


This is stated more generally in Section~\ref{dynamics}. When $k$ is a finite field, the Northcott property implies that the points with canonical height zero under $f$ are exactly the preperiodic points $\Prep(f)$, giving an immediate corollary. 

\begin{cor}\label{dynamicsthmfinite}
In the same setting as Theorem~\ref{introdynamicsthm} but with the additional hypothesis that $k=\F_q$, if $\Prep(f)\cap\Prep(g)$ is Zariski dense in $X(\overline K)$, then $\Prep(f)=\Prep(g)$.
\end{cor}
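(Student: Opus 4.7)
The plan is to derive Corollary \ref{dynamicsthmfinite} as a direct consequence of Theorem \ref{introdynamicsthm}, with the bridge being the identification, when $k=\F_q$, of the canonical height zero locus of a polarized dynamical system with its preperiodic locus. So the task reduces to showing
\[\Prep(f)=\{x\in X(\overline K):h_{\overline L_f}(x)=0\}\]
and the analogous statement for $g$; once this is established, the Zariski density of $\Prep(f)\cap\Prep(g)$ translates directly into the hypothesis of Theorem \ref{introdynamicsthm}, and its conclusion translates back to $\Prep(f)=\Prep(g)$.

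To prove this identification, one inclusion is formal. Because $\overline L_f$ is admissible with $f^*\overline L_f\cong \overline L_f^{\otimes q}$, the height satisfies the functional equation $h_{\overline L_f}(f(x))=q\cdot h_{\overline L_f}(x)$. Any $x\in\Prep(f)$ therefore has $h_{\overline L_f}(x)=0$ (the finitely many values $h_{\overline L_f}(f^n(x))$ must all be equal and yet scale by $q>1$). For the reverse inclusion, I would invoke Northcott's theorem in the function field setting: when $k=\F_q$ is finite, for every bound $d$ and every bound $H$ the set
\[\{x\in X(\overline K):[K(x):K]\le d,\;h_{\overline L_f}(x)\le H\}\]
is finite. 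This is the standard Northcott property over a global function field with finite constant field and holds for the Weil height attached to any ample adelic line bundle. If $h_{\overline L_f}(x)=0$, then every iterate $f^n(x)$ lies in $K(x)$, has degree bounded by $[K(x):K]$, and has height zero; Northcott then forces the orbit $\{f^n(x):n\ge 0\}$ to be finite, so $x\in\Prep(f)$.

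The main potential obstacle, and it is a mild one, is verifying the Northcott property in a form adapted to the admissible metrics $\overline L_f$ constructed in the paper rather than only to a fixed model-theoretic height; however, since $h_{\overline L_f}$ differs from any such model height by a bounded function (a consequence of admissibility and the ampleness of $L$), the finiteness transfers immediately. With this in hand, Corollary \ref{dynamicsthmfinite} follows by applying Theorem \ref{introdynamicsthm} exactly as indicated above.
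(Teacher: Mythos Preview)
Your proposal is correct and matches the paper's own argument essentially verbatim: the paper deduces the corollary from Theorem~\ref{introdynamicsthm} (more precisely, from its refinement Theorem~\ref{dynamicsthm}) by invoking the Northcott property over a global function field with finite constant field to identify $Z_f=\Prep(f)$ and $Z_g=\Prep(g)$, exactly as you do. The identification is recorded in the paper as Proposition~\ref{canonicalheights}(2) and (4), whose proof is the same functional-equation-plus-Northcott argument you sketch.
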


This was conjectured by Yuan--Zhang, and they prove a similar result over number fields.

Over general function fields fields the corollary does not hold, as not all canonical height zero points are preperiodic. The proofs differ as well, as while it is clear that the set $\Prep(f)$ does not depend on the choice of polarization $L$, this must be proven for canonical height zero points, and then the heights compared in a more indirect way. Even so, some limited things can be said.

\begin{cor}\label{bakercor}
Let $K$ be the function field of a smooth projective curve over any field $k$, and let $f$ and $g$ be two rational functions $\P^1_K\to\P^1_K$ which are not isotrivial. If $\Prep(f)$ and $\Prep(g)$ intersect on an infinite subset of $\P^1(\overline K)$, they are equal.
\end{cor}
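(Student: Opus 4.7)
The plan is to reduce Corollary~\ref{bakercor} to Theorem~\ref{introdynamicsthm} by identifying the canonical height zero sets of $f$ and $g$ with their preperiodic sets. Two preliminary observations set this up: since $\P^1_K$ has dimension one, any infinite subset of $\P^1(\overline K)$ is automatically Zariski dense; and the polarization $\cO_{\P^1}(1)$ turns $f$ and $g$ (both necessarily of degree $\ge 2$) into polarized dynamical systems $(f,\cO(1),\deg f)$ and $(g,\cO(1),\deg g)$, to which Theorem~\ref{introdynamicsthm} applies.

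The key non-trivial input is a Northcott-type theorem for non-isotrivial rational maps on $\P^1$ over function fields of transcendence degree one, due to M.~Baker in characteristic zero and extended by Benedetto in positive characteristic: if $f$ is not isotrivial, then $\widehat h_f(P)=0$ if and only if $P\in\Prep(f)$. Combined with the fact, established in Section~\ref{dynamics}, that the admissible metric $\overline L_f$ induces an intersection-theoretic height agreeing with the Call--Silverman canonical height $\widehat h_f$ on points, this yields
\[ \Prep(f)\;=\;\{P\in\P^1(\overline K):h_{\overline L_f}(P)=0\}, \]
and likewise $\Prep(g)=\{P:h_{\overline M_g}(P)=0\}$.

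With these identifications in hand, the hypothesis that $\Prep(f)\cap\Prep(g)$ is infinite translates directly to saying that the canonical height zero sets of $f$ and $g$ coincide on an infinite --- hence Zariski dense --- subset of $\P^1(\overline K)$. Theorem~\ref{introdynamicsthm} then forces these zero sets to agree on all of $\P^1(\overline K)$, and one final appeal to the Baker--Benedetto equivalence converts this equality back to $\Prep(f)=\Prep(g)$.

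The only real obstacle is the invocation of the Baker--Benedetto Northcott property; everything else is bookkeeping. Without the non-isotriviality hypothesis the canonical height zero locus can be strictly larger than $\Prep(f)$ (this is exactly why Corollary~\ref{dynamicsthmfinite} restricts to a finite base field, where Northcott holds for free), and over higher-dimensional $X$ it need not even be true that preperiodic points exhaust the height zero set, which is why the function-field rigidity statement in that generality must be phrased in terms of height zero rather than preperiodicity.
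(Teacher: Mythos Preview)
Your argument is correct and matches the paper's approach exactly: invoke Baker's theorem to identify $\Prep(f)$ with the canonical height zero locus $Z_f$ for non-isotrivial $f$, then apply Theorem~\ref{introdynamicsthm} (equivalently Theorem~\ref{dynamicsthm}) on $\P^1$, where ``infinite'' and ``Zariski dense'' coincide. One small correction on attribution: the history is the reverse of what you wrote---Benedetto first proved the result for \emph{polynomials} over function fields, and Baker subsequently extended it to all rational maps (in arbitrary characteristic); the split is polynomial versus rational map, not characteristic zero versus positive characteristic.
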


This is a direct consequence of our theorem and a theorem of Baker~\cite{bakerisotrivial}. Chatzidakis and Hrushovski~\cite{chatzidakis1,chatzidakis2} prove results comparing preperiodic points and height zero points using a model-theoretic non-isotriviality condition in a much more general setting, but it is difficult to combine that result into a useful rigidity statement. This is discussed further in Section~\ref{future}.

\subsection{Outline of Paper and sketch of methods}
Definitions and basic properties of adelic metrized line bundles and Chow's $K/k$-image and trace are recalled in Section \ref{defs}. Additionally, this section includes technical lemmas, such as the existence of flat metrics, which will be needed throughout the paper. 

Our main Hodge Index Theorem, a classification of numerically trivial line bundles, and an $\R$-linear variant (Theorems~\ref{hodgeindex}, \ref{numericallytrivial}, and~\ref{rlinear}) are fully stated and proven in Section \ref{hitproof}. We begin with the case of $X$ being a curve. Decomposing adelic metrized line bundles into flat and vertical pieces, and addressing intersections of the vertical parts using the local Hodge Index Theorem of \cite[Theorem 2.1]{yz}, we reduce to the case of flat metrics. Then, following the methods of Faltings \cite{faltings} and Hriljac \cite{hriljac}, we relate the intersection pairing to the Ne\'ron-Tate height pairing on the Jacobian variety of $X$, and complete the result for curves using properties of heights on the Jacobian.

Next we prove the inequality part of Theorem \ref{hodgeindex} by induction on the dimension of $X$, using a Bertini-type theorem of Seidenberg \cite{Seidenberg} to find sections which cut out nice subvarieties of $X$. Along the way we prove a Cauchy-Schwarz inequality for this intersection pairing. Theorem~\ref{numericallytrivial} and the equality part of Theorem~\ref{hodgeindex} are then also proved by induction, where we again decompose into flat and vertical metrics and must show that the $K/k$-trace and image functors behave nicely when restricted to a subvariety. This is more difficult than the inequality, however. For the inequality, we write each metrized line bundle as a limit of model metrics, and prove the result for model metrics, thus getting the same inequality on their limit. We can write the same limit in the equality case, but we cannot assume that the same equality hypothesis holds for the model metrics, and must argue by other means. Finally, Theorem \ref{rlinear} is easily deduced from Theorem \ref{hodgeindex} and its proof.

Section~\ref{dynamics} proves the application of our result to polarized algebraic dynamical systems. We first describe and prove the existence of admissible metrics for a given polarized algebraic dynamical system, which generalize flat metrics, and give rise to canonical heights defined by intersections. This transforms the rigidity statement into a statement comparing two different admissible adelic metrized line bundles, which is proved using the Hodge Index Theorem.

Finally, Section~\ref{future} demonstrates corollaries of the main results proven here, and discusses what can still be said about preperiodic points over larger fields without the Northcott property.

%%%%%%%%%%%%%%%%%%%%%%%%%%%%%%
%%%%%%%%%%%%%%%%%%%%%%%%%%%%%%
%%%%%%%%%%%%%%%%%%%%%%%%%%%%%%
%%%%%%%%%%%%%%%%%%%%%%%%%%%%%%

\section{Preliminaries}\label{defs}

Here we introduce the definitions, basic properties, and lemmas which will be needed throughout the paper. The core theory used in this paper is built on local intersection theory as developed by Gubler \cite{G98,G07b}, Chambert-Loir \cite{C-L}, Chambert-Loir---Thuillier \cite{C-L-T}, and Zhang \cite{Z95}. More generally, one can find an introduction to Arakelov theory in~\cite{moriwakibook,langarakelov,soulelectures}.

\subsection{Metrized line bundles over local fields}\label{adelicsubsection}

Let $K$ be a complete non-Archimedean field with non-trivial absolute value $|\cdot|$. Denote the valuation ring of $K$ by 
\[K^{\circ}:=\{a\in K:|a|\le1\},\]
and its maximal ideal by
\[K^{\circ\circ}:=\{a\in K:|a|<1\},\]
so that $\widetilde K:=K^{\circ}/K^{\circ\circ}$ is the residue field. 

Let $X$ be a variety over $K$ and denote by $X^{\an}$ its Berkovich analytification as in \cite{Berk}. For $x\in X^{\an}$, write $K(x)$ for the residue field of $x$. A line bundle $L$ on $X$ has an analytification, denoted $L^{\an}$, as a line bundle on $X^{\an}$.

\begin{definition}
A \emph{continuous metric} $||\cdot||$ on $L$ consists of a $K(x)$-metric $||\cdot||_x$ on $L^{\an}(x)$ for every $x\in X^{\an}$, where this collection of metrics is continuous in the sense that for every rational section $s$ of $L$, the map $X^{\an}\to\R$ defined by $x\mapsto||s(x)||_x$ is continuous away from the poles of $s$. We call $L$ with a continuous metric a \emph{metrized line bundle} and denote this by $\overline L=(L,||\cdot||)$. For a fixed line bundle $L$, limits of metrics are taken with respect to the topology induced by the supremum norm.

An important example of a continuous metric is a \emph{model metric}: Let $\cX$ be a model of $X$ over $K^{\circ}$, i.e. a projective, flat, finitely presented, integral scheme over $\Spec K^{\circ}$ whose generic fiber $\cX_K$ is isomorphic to $X$, and let $\calL$ be a line bundle on $\cX$ whose generic fiber $\calL_K$ is isomorphic to $L$. Then we can define a continuous metric on $L$ by specifying that for any trivialization $\calL_{\cU}\isom\cO_{\cU}$ on an open set $\cU\subset\cX$ given by a rational section $\ell$,
we have $||\ell(x)||_x=1$ for any $x$ reducing to $\cU_{\widetilde K}$ in the reduction $\widetilde X$ over $\widetilde K$. 
\end{definition}

We now define several important properties and notations.

\begin{definition}\label{metrizedprops}
Let $\overline L=(L,||\cdot||)$ and $\overline M$ be metrized line bundles on $X$.
\begin{enumerate}
\item
A model metric is \emph{\semipositive{}} if it is given by a relatively nef line bundle on the corresponding model.
\item
Call both $\overline L$ and $||\cdot||$ \emph{\semipositive{}} if $||\cdot||$ is equal to a limit of \semipositive{} model metrics.
\item
$\overline L$ is \emph{arithmetically positive} if it is \semipositive{} and $L$ is ample.
\item
$\overline L$ is \emph{integrable} if it can be written as $\overline L=\overline L_1-\overline L_2$ with $\overline L_1$ and $\overline L_2$ \semipositive{}.
\item
$\overline M$ is \emph{$\overline L$-bounded} if there exists a positive integer $m$ such that $m\overline L+\overline M$ and $m\overline L-\overline M$ are both \semipositive{}.
\item
$\overline L$ is \emph{vertical} if it is integrable and $L\cong\cO_X$.
\item
$\overline L$ is \emph{constant} if it is isometric to the pull-back of a metrized line bundle on $\Spec K$.
\item
$\widehat{\Pic}(X)$ is defined to be the group of isometry classes of integrable metrized line bundles.
\end{enumerate}
\end{definition}

\begin{remark}
When we say a line bundle is relatively ample or nef, we always mean with respect to the structure morphism, here $\cX\to\Spec K^{\circ}$. A concise discussion of relative amplitude and nefness can be found in \cite{lazarsfeld}, Chapter 1.7.
\end{remark}

We also have a local intersection theory for metrized line bundles on $X$, due to~\cite{G98,G07}, and to~\cite{Z95} when $K$ has a discrete valuation. Let $Z$ be a $d$-dimensional cycle on $X$, let $\overline L_0,\dots,\overline L_d$ be integral metrized line bundles on $X$, and $\ell_0,\dots,\ell_d$ sections of each respectively such that 
\[\left(\bigcap_i|\div(\ell_i)|\right)\cap|Z|=\emptyset,\]
where $|Z|$ means the underlying topological space of the cycle $Z$. Then $Z$ has a local height $\widehat{\div}(\ell_0)\cdots\widehat{\div}(\ell_d)\cdot[Z]$ with the following properties:

\begin{enumerate}
\item
The local height is linear in $\widehat{\div}(\ell_i)$ and $Z$.
\item
For fixed sections, it is continuous with respect to the metrics.
\item
When $\overline L_i$ has a model metric given by $\calL_i$ on a common model $\cX$, the height is given by classical intersections:
\[\widehat{\div}(\ell_0)\cdots\widehat{\div}(\ell_d)\cdot[Z]=\div_{\cX}(\ell_0)\cdots\div_{\cX}(\ell_d)\cdot[\mathcal Z],\]where $\mathcal Z$ is the Zariski closure of $Z$ in $\cX$.
\item
If the support of $\div(\ell_0)$ contains no component of $Z$, there is a measure $c_1(\overline L_1)\cdots c_1(\overline L_d)\delta_Z$ on $X^{\an}$ due to \cite{C-L} which allows $\widehat{\div}(\ell_0)\cdots\widehat{\div}(\ell_d)\cdot[Z]$ to be computed inductively as 
\[\widehat{\div}(\ell_1)\cdots\widehat{\div}(\ell_d)\cdot[\div(\ell_0)\cdot Z]-\int_{X^{\an}}\log ||\ell_0(x)||_xc_1(\overline L_1)\cdots c_1(\overline L_d)\delta_Z.
\]
This notation is meant to suggest that $c_1(\overline L_i)$ should be thought of as the arithmetic version of the classical Chern form $c_1(L_i)$.
\item
If $L_0|_{Z_j}\cong\cO_{Z_j}$ and $c_1(L_1)\cdots c_1(L_d)\cdot[Z_j]=0$ for every irreducible component $Z_j$ of $Z$, then this pairing does not depend on the choice of sections, so we may simply write
\[\overline L_0\cdots\overline L_d\cdot Z=\widehat{\div}(\ell_0)\cdots\widehat{\div}(\ell_d)\cdot[Z].\]
\end{enumerate}
When $Z=X$, we typically omit $Z$ in all of the above notation.

By definition every integrable metric can be written as a limit of model metrics (with respect to the supremum norm).  Properties (3) and (4) above guarantee that intersections of integrable metrized line bundles are equal to the corresponding limits of intersections of models which approximate them.

\subsection{Adelic metrized line bundles}
We now move to the global theory, which is built from the theory of metrized line bundles over each localization, discussing first models and then adelic metrized line bundles. We return to the setting of the main theorems of this paper, where $k$ is any algebraically closed field, $B$ is a smooth projective curve over $k$, $K=k(B)$ is its function field, and $\pi:X\to\Spec(K)$ is a geometrically normal, geometrically integral, projective variety.

Let $\cX$ be a model for $X$, meaning that $\cX\to B$ is geometrically integral, projective, and flat, and the generic fiber $\cX_K$ is isomorphic to $X$. Given a geometrically integral subvariety $\cY$ of dimension $d+1$ in $\cX$ and line bundles $\calL_0,\dots,\calL_d$ on $\cX$ each with a respective section $\ell_0,\dots,\ell_d$ such that their common support has empty intersection with $\cY_K$, the arithmetic intersection pairing on $\Pic(\cX)$ is defined locally as
\begin{equation*}
\calL_0\cdots\calL_d\cdot\cY:=\widehat{\div}(\ell_0)\cdots\widehat{\div}(\ell_d)\cdot[\cY]\\:=\sum_{\nu}\left(\widehat{\div}(\ell_0)\cdots\widehat{\div}(\ell_d)\cdot[\cY]\right)_{\nu},
\end{equation*}
where $\nu$ ranges over the closed points (places) of $B$, and 
\[\left(\widehat{\div}(\ell_0)\cdots\widehat{\div}(\ell_d)\cdot[\cY]\right)_{\nu}\]
means the local intersection number after base-change to the complete field $K_{\nu}$. As the notation suggests, this does not depend on the choice of sections. Again we typically drop $\cY$ in the notation if $\cY=\cX$, and when $\cX$ is one-dimensional, we call $\widehat{\deg}(\calL_0):=\calL_0\cdot\cX$ the arithmetic degree of $\calL_0$.

\begin{remark}This arithmetic intersection theory for $\cX\to B$ is not the same as the classical intersection theory given by viewing $\cX$ as a variety over the field $k$. Instead, it matches Arakelov's arithmetic intersection theory~\cite{arakelov2,arakelov} except that in our function field setting there are no Archimedean places to consider, as $B$ is projective.
\end{remark}

Given a line bundle $L$ on $X$ we call a line bundle $\calL$ on $\cX$ a model for $L$ provided that $\calL_K\cong L$. For each place $\nu$ of $B$, completing with respect to $\nu$ induces a model over $K_{\nu}^{\circ}$ and a model metric $||\cdot||_{\calL,\nu}$ of $L^{\an}_{K_{\nu}}$ on $X^{\an}_{\nu}:=X^{\an}_{K_{\nu}}$.

\begin{definition}
The collection $||\cdot||_{\calL,\A}=\{||\cdot||_{\calL,\nu}\}_{\nu}$ of continuous metrics for every place $\nu$ of $B$ given by $(\cX,\calL)$ is called a \emph{model adelic metric} on $L$. More generally, an \emph{adelic metric} $||\cdot||_{\A}$ on $L$ is a collection of continuous metrics $||\cdot||_{\nu}$ of $L^{\an}_{K_{\nu}}$ on $X^{\an}_{\nu}$ for every place $\nu$, which agrees with some model adelic metric at all but finitely many places. A line bundle on $X$ with an adelic metric is called an \emph{adelic metrized line bundle}, and is denoted $\overline L=(L,||\cdot||_{\A})$. When the context is clear we will frequently drop adelic and simply write \emph{metrized line bundle}. For a fixed line bundle $L$, limits of adelic metrics are taken with respect to the topology induced by $\max_{\nu}||\cdot||_{sup}$, the maximum of the supremum norm on each fiber. Such a limit does not require fixing a single model $\cX$.
\end{definition}

We extend our local definitions of properties of metrized line bundles to the global case.
\begin{definition}
Let $\overline L$ be an adelic metrized line bundle.
\begin{enumerate}

\item
$\overline L$ is \emph{nef} if it is equal to a limit of model metrics induced by nef line bundles on models of $X$.

\item 
$\overline L$ is \emph{integrable} if it can be written as $\overline L=\overline L_1-\overline L_2$, where each $\overline L_i$ is nef.

\item
$\overline L$ is \emph{arithmetically positive} if $L$ is ample and $\overline L-\pi^*\overline N$ is nef for some adelic metrized line bundle $\overline N$ on $\Spec K$ with $\widehat{\deg}(\overline N)>0$.
\item
$\overline M$ is \emph{$\overline L$-bounded} if there exists a positive integer $m$ such that $m\overline L+\overline M$ and $m\overline L-\overline M$ are both \semipositive{}.
\item
$\overline L$ is \emph{vertical} if it is integrable and $L\cong\cO_X$
\item
$\overline L$ is \emph{constant} if it is isometric to the pull-back of a metrized line bundle on $\Spec K$.
\item
$\widehat{\Pic}(X)$ is defined to be the group of isometry classes of integrable metrized line bundles.

\end{enumerate}
\end{definition}

\begin{remark}
In the definition of arithmetically positive, we have thus-far only defined the arithmetic degree in the model case, but every adelic metrized line bundle on $\Spec K$ has a model metric, so we may use that definition. The definition is also resolved in the following material.
\end{remark}

\begin{remark}
To avoid confusion, note that the preceding definitions are not equivalent to requiring that the local property of the same name holds at every fiber. In fact, since relative amplitude (resp. nefness) holds if and only if the restriction to every fiber is ample (resp. nef), if a property holds in the global setting then the corresponding property holds locally at every place, but the converse is false. For example, if $\overline L_{\nu}$ is nef on $X_{\nu}$ for every place, each $\overline L_{\nu}$ can be written as a limit of nef models $\calL_{\nu,i}$ on $\cX_{\nu,i}$, but it may not be possible to assemble these into global models $\calL_i$ on models $\cX_i$ of $X$.
\end{remark}

Global intersections are defined similarly to the model case, except with the local metrics given explicitly by the adelic metric instead of induced by a model. Given a $d$-dimensional integral subvariety $Z$ of $X$, integrable adelic metrized line bundles $\overline L_0,\dots,\overline L_d$ with respective sections $\ell_0,\dots,\ell_d$ with empty common intersection with $Z$, their intersection is 
\begin{equation*}\overline L_0\cdots\overline L_d\cdot Z:=\widehat{\div}(\ell_0)\cdots\widehat{\div}(\ell_d)\cdot[Z]\\=\sum_{\nu}\widehat{\div}(\ell_0|_{X_{\nu}})\cdots\widehat{\div}(\ell_d|_{X_{\nu}})\cdot[Z|_{X_{\nu}}],
\end{equation*}
where again this is independent of the choice of sections. Summing the local induction formula at each place produces a global induction formula: letting $\ell_0$ be a rational section of $\overline L_0$ whose support does not contain $Z$, 
\begin{equation*}\overline L_0\cdots\overline L_d\cdot Z=\overline L_1\cdots\overline L_d\cdot (Z\cdot\div(\ell_0))\\-\sum_{\nu}\int_{X_{\nu}^{\an}}\log ||\ell_0(x)||_{\nu}c_1(\overline L_1,\nu)\cdots c_1(\overline L_d,\nu)\delta_Z|_{X_{\nu}}.
\end{equation*}

As before, we drop $Z$ when $Z=X$, and when $X$ is zero-dimensional, we call $\widehat{\deg}(\overline L_0):=\overline L_0\cdot X$ the arithmetic degree of $\overline L_0$.

As in the local case, we can always compute intersections of adelic metrized line bundles by approximating them with model metrics and computing the limit of the corresponding arithmetic intersections of the models.

\begin{definition}
An adelic metrized line bundle $\overline M$ on $X$ of dimension $n$ is called \emph{numerically trivial} if for any $\overline L_1,\dots,\overline L_n\in\widehat{\Pic}(X)$,
\[\overline M\cdot \overline L_1\cdots\overline L_n=0.\]
Call two adelic metrized line bundles \emph{numerically equivalent} if their difference is numerically trivial.
\end{definition}

\subsection{Flat metrics}\label{flatsubsection}
Adelic metrized line bundles with flat metrics form an especially nice class of adelic metrized line bundles. We will often be able to split a metrized line bundle into a bundle with a flat metric plus a vertical bundle, and then work with each of these separately, as flatness will tell us that these have trivial intersection.

\begin{definition}
Let $X$ be a projective variety over a complete field $K$, and let $\overline L$ be a metrized line bundle on $X$. Then $\overline L$ is \emph{flat} if for any morphism $f:C\to X$ of a projective curve over $K$ into $X$, we have $c_1(f^*\overline L)=0$ on the Berkovich analytification $C^{\an}$. If now $X$ is a projective variety over a global field and $\overline L$ an adelic metrized line bundle on $X$, call $\overline L$ flat provided it is flat at every place.
\end{definition}
Note that if $\overline L$ is flat, $L$ must be numerically trivial, as
\[\deg(L|_C)=\int_{C^{\an}}c_1(\overline L|_C)=0.\]

\begin{lemma}\label{flatlemma}
Let $L$ be a numerically trivial line bundle on a projective, normal variety $X$ over a global function field $K$. Then $L$ has a flat metric, which is unique up to constant multiple. 
\end{lemma}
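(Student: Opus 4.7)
I would prove existence and uniqueness as two separate arguments.

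\emph{Existence.} Since the quotient $\Pic^{\tau}(X)/\Pic^0(X)$ is finite, some positive power $M := L^{\otimes n}$ lies in $\Pic^0(X)$. Using the Albanese morphism $\alb: X \to A := \Alb(X)$ (which exists because $X$ is projective and normal), I would write $M = \alb^*M'$ for a unique $M' \in \Pic^0(A)$. On the abelian variety $A$, for each place $\nu$ of $K$ and any starting model metric $||\cdot||_{0,\nu}$ on $M'$, I would construct the canonical metric by Tate's telescoping procedure
\[ ||\cdot||_{\nu, can} := \lim_{m \to \infty}\bigl([m]^*||\cdot||_{0,\nu}\bigr)^{1/m}, \]
whose convergence uses the canonical isomorphism $[m]^*M' \cong M'^{\otimes m}$ available because $M' \in \Pic^0(A)$ is antisymmetric. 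The resulting metric satisfies $[m]^*||\cdot||_{can} = ||\cdot||_{can}^{\otimes m}$, forcing its curvature $c_1(\overline{M'}_{can})$ to be translation-invariant; combined with the vanishing of the numerical class of $M'$ this forces $c_1 = 0$, i.e.\ flatness, at every place. Pulling back along $\alb$ yields a flat metric on $M$, and taking the pointwise positive $n$-th root produces a flat metric on $L$. At almost all places the starting model metric already coincides with the Tate limit, so the outcome is a genuine adelic metric.

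\emph{Uniqueness.} If $\overline L_1$ and $\overline L_2$ are two flat adelic metrics on $L$, then $\overline L_1 \otimes \overline L_2^{-1}$ is a flat metric on $\cO_X$, recorded at each place $\nu$ by a continuous positive function $\phi_\nu$ on $X^{\an}_\nu$. Flatness means that for any morphism $C \to X$ from a projective curve, $\log\phi_\nu$ restricts to a harmonic function on $C^{\an}$ in the sense of Chambert-Loir and Thuillier. The maximum principle for harmonic functions on projective Berkovich curves, together with the fact that any two points of $X^{\an}_\nu$ can be connected by a chain of images of projective curves (using projectivity and normality of $X$), forces $\phi_\nu$ to be constant at every place. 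Hence the two flat metrics on $L$ differ by the pull-back of an adelic metric on $\cO_{\Spec K}$, which is exactly the "constant multiple" in the statement.

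\emph{Main obstacle.} The existence half is largely formal after the Albanese reduction, though one must still verify that the Tate construction produces a metric agreeing with a model metric at all but finitely many places. The delicate step is uniqueness, which depends on a non-archimedean maximum principle asserting that a continuous function on a projective normal Berkovich space whose restriction to every curve is harmonic must be constant. This is the part where the paper's technical setup and the pluripotential machinery of Chambert-Loir--Thuillier really do the work.
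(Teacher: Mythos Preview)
Your overall architecture matches the paper's: reduce to a power lying in $\Pic^0(X)$, pull back from the Albanese variety, run Tate's limiting argument to obtain an admissible metric on the abelian variety, and for uniqueness compare two flat metrics on $\cO_X$ via curves. The uniqueness argument is essentially identical to the paper's, which phrases your ``maximum principle on Berkovich curves'' as the local Hodge index theorem in dimension one: a flat vertical bundle on a curve has self-intersection zero (by the induction formula, since $c_1=0$), and the local Hodge index then forces $\lVert 1\rVert$ to be constant.

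The one place where your argument is thinner than the paper's is the passage from \emph{admissible} to \emph{flat} on the abelian variety. You write that $[m]^*\overline{M'}_{can}\cong m\,\overline{M'}_{can}$ ``forces its curvature to be translation-invariant; combined with the vanishing of the numerical class this forces $c_1=0$.'' In the non-archimedean setting this heuristic is not self-evident: the Chambert--Loir ``curvature'' that enters the definition of flatness is a measure on each test curve $C^{\an}$, and neither the translation-invariance step nor the ``invariant plus numerically trivial implies zero'' step is immediate for such measures. The paper does not argue this directly. Instead, given a curve $C\to A$, it factors through the Jacobian $C\to\Jac(C)\to A$, notes that the pulled-back metric on $\Jac(C)$ is again admissible, and then invokes a result of Gubler (\cite[Remark~3.14]{G07b}) asserting that an admissible metric on an algebraically trivial bundle over $\Jac(C)$ has $c_1=0$ on $C^{\an}$. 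That citation is doing real work; your translation-invariance sketch would need a comparable non-archimedean input to be complete.
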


\begin{remark}
When $X$ is a curve this lemma has a much simpler proof using linear algebra; see for example \cite[Theorem 1.3]{hriljac}. If $\cX\to B$ is a model for $X$ and $\cX_{\nu}$ is geometrically normal (so including every place with good reduction), then the flat metric at $\nu$ is simply that associated to closure of the point $L$ in $\Pic(\cX_{\nu})$.
\end{remark}

To prove the lemma in general, the following related notion will be useful.
\begin{definition}
Let $\overline L$ be a metrized line bundle on an abelian variety $A$ such that $L$ is algebraically trivial. We call $\overline L$ \emph{admissible} if $[2]^*\overline L\cong 2\overline L$.
\end{definition}

\begin{proof}[Proof of the lemma]
First suppose $A$ is an abelian variety. Then $L$ is algebraically trivial, and we have an isomorphism $\phi:[2]^*L\cong2L$. Take any metric $||\cdot||_1$ on $L$. Then Tate's limiting argument, as in \cite[Theorem 2.2]{Z95}, shows that
\[||\cdot||_n:=\phi^*[2]^*||\cdot||^{\frac12}_{n-1}\]
converges to an admissible metric $||\cdot||_0$ on $L$, and that further this is the unique admissible metric on $L$ up to constant multiples. 

Now let $C\to A$ be a smooth projective curve in $A$. After a translation, we can fix a point $x_0\in C(K)$ which maps to $0\in A$. By the universal property of the Jacobian, $C\to A$ factors through the Jacobian map $C\to\Jac(C)$ taking $x_0\to0$, and the pullback of $(L,||\cdot||_0)$ to $\Jac(C)$ is also admissible. Then by \cite[Remark 3.14]{G07b}, $c_1(L,||\cdot||_0)=0$, and hence $L$ has a flat metric. By taking the tensor product of this metric with the inverse of any other flat metric on $L$, uniqueness up to constant multiple is reduced to showing that $||1||$ is constant for any flat metric on $\cO_X$. Any two points on $X$ are connected by a curve; let $D$ be its normalization. Then $||1||$ is constant by local Hodge Index Theorem in dimension one at each place.

Now choose a point $x_0\in X(K)$ and recall the Albanase map $i:X\to \Alb(X)$ taking $x_0$ to $0$. Since $L$ is numerically trivial, we may replace it by a multiple and assume it is algebraically trivial. Then $L$ corresponds to a $K$ point $\xi$ of $\Pic_{red,X}^0=\Alb(X)^{\vee}$. By definition, $L$ is (isomorphic to) the Poincar\'e bundle $P$ on $\Alb(X)\times \Alb(X)^{\vee}$ restricted to $\Alb(X)\times\{\xi\}$, then pulled back through 
\[i\times \text{id}:X\times \Alb(X)^{\vee}\longrightarrow  \Alb(X)\times \Alb(X)^{\vee}.\]

$P|_{\Alb(X)\times\{\xi\}}$ is algebraically trivial, and hence has a flat metric. But the pullback of a flat metric is also flat, so this defines a flat metric for $L$.
\end{proof}

The reason we care about flat metrics is demonstrated by the following lemma and its corollary:

\begin{lemma}\label{flatzeromeasure}
Let $K$ be a complete non-Archimedean field, and $X\to\Spec K$ a geometrically connected, geometrically normal, projective variety of dimension $n$, with a flat metrized line bundle $\overline M$. Then given any integrable metrized line bundles $\overline L_1,\dots, L_{n-1}$ on $X$,
\[c_1(\overline M)c_1(\overline L_1)\cdots c_1(\overline L_{n-1})=0.\]
\end{lemma}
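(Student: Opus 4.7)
The strategy is to reduce the vanishing to the already-established vanishing of Chern currents of admissible metrics on abelian varieties; this is precisely the content of \cite[Remark~3.14]{G07b} that the proof of Lemma~\ref{flatlemma} already relies on.

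The construction of the flat metric given in the proof of Lemma~\ref{flatlemma} is essentially local in nature and goes through when $K$ is a single complete non-archimedean field: Tate's limiting procedure on the Albanese, the factorization of curves in $A$ through their Jacobians, and the uniqueness argument via the one-dimensional local Hodge index theorem each run verbatim. After replacing $M$ by a positive multiple so that it is algebraically trivial (harmless, since $c_1$ is $\Z$-linear in the line bundle) and subtracting a constant pullback from $\Spec K$ (which contributes nothing to $c_1$), the plan is therefore to write $\overline M = i^{*}\overline P$, where $i\colon X \to A := \Alb(X)$ is the Albanese map and $\overline P$ is the canonical admissible metric on an algebraically trivial line bundle on $A$.

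The cited Remark~3.14 of \cite{G07b} then yields $c_1(\overline P) = 0$ on $A^{\an}$; equivalently, for any integrable metrized line bundles $\overline N_1,\ldots,\overline N_{\dim A - 1}$ on $A$, the Chambert-Loir measure $c_1(\overline P)c_1(\overline N_1)\cdots c_1(\overline N_{\dim A - 1})$ is identically zero. Combined with the functoriality of Chern currents under pullback along the proper morphism $i$, this gives
\[
c_1(\overline M) \;=\; c_1(i^{*}\overline P) \;=\; i^{*}c_1(\overline P) \;=\; 0
\]
as a current on $X^{\an}$, and hence the top-degree measure $c_1(\overline M)c_1(\overline L_1)\cdots c_1(\overline L_{n-1})$ vanishes irrespective of the integrable bundles $\overline L_i$.

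The main technical obstacle is to interpret the pullback step when $\dim A < \dim X$, in which case $c_1(\overline P)$ is not itself a measure but a closed current of lower degree in Gubler's framework, so current-theoretic pullback requires care. To sidestep this, one can test the measure $\mu := c_1(\overline M)c_1(\overline L_1)\cdots c_1(\overline L_{n-1})$ against the dense class of model functions $-\log\|s\|_{\overline N}$ for integrable $\overline N$ with section $s$. The induction formula recorded in Section~\ref{adelicsubsection} rewrites $\int -\log\|s\|\,d\mu$ as $\overline N \cdot \overline M \cdot \overline L_1 \cdots \overline L_{n-1} - \overline M \cdot \overline L_1 \cdots \overline L_{n-1} \cdot \div(s)$, and after a further reduction to vertical $\overline N$ (so $\div(s)=0$) one is left with the identity $\overline N \cdot i^{*}\overline P \cdot \overline L_1 \cdots \overline L_{n-1} = 0$, which follows by the projection formula together with Gubler's vanishing on $A$.
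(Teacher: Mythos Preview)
Your strategy is genuinely different from the paper's. The paper argues by induction on $n=\dim X$: after reducing to the case where $\overline L_{n-1}$ carries an ample model metric, Seidenberg's Bertini theorem produces a section cutting out a normal horizontal subvariety with generic fiber $Y$. Since the restriction of a flat metrized line bundle to $Y$ is again flat (curves in $Y$ are curves in $X$), the induction hypothesis applies on $Y$; the lemma then falls out by computing $\overline M\cdot\overline L_1\cdots\overline L_n$ in two different orders via the induction formula and comparing. The Albanese never enters.

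Your route through $\Alb(X)$ has a real gap in the final step. You reduce correctly to showing $\overline N\cdot i^{*}\overline P\cdot\overline L_1\cdots\overline L_{n-1}=0$ for vertical $\overline N$, and then assert this ``follows by the projection formula together with Gubler's vanishing on $A$.'' But the projection formula for local heights transports an intersection along a proper morphism only when \emph{all} the metrized line bundles involved are pullbacks along that morphism; here $\overline N$ and the $\overline L_i$ live on $X$ and are not of the form $i^{*}(\text{something})$. There is no mechanism to move the computation to $A$. Equivalently: even if one grants that $c_1(\overline P)c_1(\overline N_1)\cdots c_1(\overline N_{g-1})=0$ on $A^{\an}$ for all integrable $\overline N_j$ on $A$ (with $g=\dim A$), this says nothing about $c_1(i^{*}\overline P)c_1(\overline L_1)\cdots c_1(\overline L_{n-1})$ on $X^{\an}$ when the $\overline L_i$ are arbitrary. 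Chambert--Loir measures simply do not admit the pullback functoriality $c_1(i^{*}\overline P)=i^{*}c_1(\overline P)$ that your middle display presumes; you flag this yourself, but the proposed fix reintroduces the same problem one line later. The paper's Bertini induction avoids the issue entirely by never leaving $X$.
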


\begin{proof}
We show that 
\[\int_{X^{\an}}\log||\ell_n(x)||_xc_1(\overline M)\cdot c_1(\overline L_1)\cdots c_1(\overline L_{n-1})=0\]
    for every section $\ell_n$ of any metrized line bundle $\overline L_n$. Proceed by induction on $n$. Since any integral metrized line bundle can be written as a difference of arithmetically positive metrized line bundles and the measure is additive with respect to the metrized line bundles, we may assume that $\overline L_{n-1}$ is arithmetically positive without loss of generality. Further, by approximation, it suffices to treat the case where $\overline L_{n-1}$ is a model metric, induced by some ample line bundle $\calL$ on a model $\cX$ for $X$. By Seidenberg's Bertini theorem~\cite[Theorem 7']{Seidenberg}, $\calL$ has a section $s$ which cuts out a horizontal, geometrically integral, normal subvariety $\cY$. After base changing to a finite extension $K'$ of $K$, we may assume that this subvariety is geometrically normal. Since this extension merely scales the intersection number by $[K':K]$ it has no effect on the proof of this lemma. Let $Y$ be the generic fiber of $\cY$, and let $Z$ be $\div(\ell_n)$ restricted to $Y$.

We compute an intersection product in two different ways. First,
\begin{eqnarray*}
\overline M\cdot\overline L_1\cdots\overline L_n&=&\overline M|_Y\cdot\overline L_1|_Y\cdots\overline L_{n-2}|_Y\cdot\overline L_n|_Y\\
&=& \overline M|_Z\cdot\overline L_1|_Z\cdots\overline L_{n-2}|_Z-\int_{X^{\an}}\log||\ell_n(x)||_xc_1(\overline M)c_1(\overline L_1)\cdots c_1(\overline L_{n-2})\delta_Y\\
&=&\overline M|_Z\cdot\overline L_1|_Z\cdots\overline L_{n-2}|_Z,\\
\end{eqnarray*}
where the first equality follows from $\cY$ being horizontal, the second from the induction formula for local intersection numbers, and the third from the induction hypothesis. We now compute this in a different order:
\begin{eqnarray*}
\overline M\cdot\overline L_1\cdots\overline L_n&=&\overline M\cdot\overline L_1\cdots\overline L_{n-1}\cdot(\div(\ell_n))-\int_{X^{\an}}\log||\ell_n(x)||_xc_1(\overline M)c_1(\overline L_1)\cdots c_1(\overline L_{n-1})\\
&=&\overline M|_Z\cdot\overline L_1|_Z\cdots\overline L_{n-2}|_Z-\int_{X^{\an}}\log||\ell_n(x)||_xc_1(\overline M)c_1(\overline L_1)\cdots c_1(\overline L_{n-1}),\\
\end{eqnarray*}
where now the first inequality follows from the induction formula, and the second from $\cY$ being horizontal. Comparing the two equalities completes the proof.
\end{proof}

\begin{cor}\label{flatzerocor}
Let $\overline M$ be flat, $\overline N$ be vertical, and $\overline L_1,\dots,\overline L_{n-1}$ any integrable adelic metrized line bundles. Then
\[\overline M\cdot\overline N\cdot\overline L_1\cdots\overline L_{n-1}=0.\]
\end{cor}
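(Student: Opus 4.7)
The plan is to reduce the corollary directly to Lemma~\ref{flatzeromeasure} by exploiting the triviality of the line bundle underlying $\overline N$. Since $\overline N$ is vertical, we have $N\cong\cO_X$, so the constant section $\ell_0=1$ is a global section of $N$ with $\div(1)=0$. Choose any sections $\ell_M,\ell_1,\dots,\ell_{n-1}$ of $\overline M,\overline L_1,\dots,\overline L_{n-1}$ whose supports meet $X$ in the transverse manner needed to define the intersection, and apply the global induction formula (the adelic analogue of property (4) from Section~\ref{adelicsubsection}) with respect to $\ell_0=1$:
\[
\overline M\cdot\overline N\cdot\overline L_1\cdots\overline L_{n-1}
=\overline M\cdot\overline L_1\cdots\overline L_{n-1}\cdot\div(1)
-\sum_{\nu}\int_{X_{\nu}^{\an}}\log\|1\|_{\nu}\,c_1(\overline M)_{\nu}c_1(\overline L_1)_{\nu}\cdots c_1(\overline L_{n-1})_{\nu}.
\]
The first term vanishes because $\div(1)=0$.

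For each place $\nu$ of $B$, the base change $X_{\nu}=X_{K_{\nu}}$ is a geometrically connected, geometrically normal, projective variety over the complete non-Archimedean field $K_{\nu}$, by faithfully flat base change from the standing assumptions on $X/K$. Because $\overline M$ is flat globally, it is flat at $\nu$, so Lemma~\ref{flatzeromeasure} applies and yields
\[
c_1(\overline M)_{\nu}c_1(\overline L_1)_{\nu}\cdots c_1(\overline L_{n-1})_{\nu}=0
\]
as a signed measure on $X_{\nu}^{\an}$. The continuous function $\log\|1\|_{\nu}$ therefore integrates to $0$ at every place, and summing over $\nu$ gives the claimed vanishing.

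The only step requiring any care is checking that Lemma~\ref{flatzeromeasure} genuinely applies at each completion, which amounts to the mild observation that geometric normality and geometric integrality are preserved under the base change $K\to K_{\nu}$; no additional obstacle arises.
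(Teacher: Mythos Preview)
Your proof is correct and follows essentially the same route as the paper: use the section $s=1$ of $N\cong\cO_X$, apply the induction formula so that the divisorial term vanishes, and then kill each local integral by invoking Lemma~\ref{flatzeromeasure}. The only difference is that you spell out the base-change justification for applying the lemma at each place, which the paper leaves implicit.
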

\begin{proof}
Since $\overline N$ is vertical, $N=\cO_X$. Compute this intersection using the induction formula with the section $s=1$ of $\cO_X$:
\[\overline M\cdot\overline N\cdot\overline L_1\cdots\overline L_{n-1}=\overline M\cdot\overline L_1\cdots\overline L_{n-1}\cdot(\div (s))-\int_{X^{\an}}\log||1||_xc_1(\overline M)\cdot c_1(\overline L_1)\cdots c_1(\overline L_{n-1}).\]
The first term is zero since $\div(s)$ is empty, and the integral is zero by the preceding lemma.
\end{proof}

\subsection{Heights of points and subvarieties}
An important application of the intersection theory of adelic metrized line bundles is to define height functions. 

\begin{definition}
Let $\overline N\in\widehat{\Pic}(X)$. We define the \emph{height} of a point $x\in X(\overline K)$ by
\[h_{\overline N}(x):=\frac1{[K(x):K]}\overline N\cdot \tilde x,\]
where $\tilde x$ is the image of $x$ in $X$ via $X_{K(x)}\to X_K=X$.
\end{definition}

This is equal to the more well-known definition of the canonical height $\widehat{h}_f$, which does not require intersection theory~\cite{CS,heights}, but we use the above definition as it generalizes to define heights of subvarieties.

\begin{definition}
Let $d=\dim Y$. The \emph{height} of $Y$ with respect to $\overline N$ is defined to be
\[h_{\overline N}(Y):=\frac{\left(\overline N|_Y\right)^{d+1}}{(d+1)\left(N|_Y\right)^{d}}\]
and the \emph{essential minimum} of $Y$ with respect to $\overline N$ is
\[\lambda_1(Y,\overline N):=\sup_{\substack{U\subset Y \\ \text{open}}}\left(\inf_{x\in U(\overline K)} h_{\overline N|_Y}(x).\right)\]
\end{definition}

By the successive minima of Zhang \cite[Theorem 1.1]{Z95}, and proven in the function field setting by Gubler \cite[Theorem 4.1]{G07}, we can state the following.
\begin{proposition}\label{successiveminima}
When $\overline N$ is nef,
\[\lambda_1(Y,\overline N)\ge h_{\overline N}(Y)\ge0.\]
\end{proposition}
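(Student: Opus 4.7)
The statement is Zhang's theorem of successive minima~\cite{Z95}, extended to the function-field setting by Gubler~\cite{G07}, so the proof is essentially a citation. Nevertheless, I would sketch the structure as follows.

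The easy inequality $h_{\overline N}(Y)\ge 0$ follows by approximation: write $\overline N|_Y$ as a limit of model metrics $\calL_i$ induced from nef line bundles on projective models $\cY_i\to B$ of $Y$. By continuity of adelic intersections under model limits, $(\overline N|_Y)^{d+1}$ is the limit of $\calL_i^{d+1}$, each of which is a top self-intersection of a nef line bundle on a $(d+1)$-dimensional projective variety over $k$ and hence non-negative. The denominator $(N|_Y)^d$ is positive since the height is only defined when $N|_Y$ is big.

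For $\lambda_1(Y,\overline N)\ge h_{\overline N}(Y)$, I would argue by contradiction. Assume $\lambda_1(Y,\overline N)<c<h_{\overline N}(Y)$, and choose a metric $\overline e_0$ on the trivial bundle over $\Spec K$ with $\widehat{\deg}(\overline e_0)=1$. Setting $\overline N':=\overline N|_Y-c\,\pi^*\overline e_0|_Y$, the pullback $\pi^*\overline e_0$ is simultaneously flat and vertical, so by Corollary~\ref{flatzerocor} every intersection $(\pi^*\overline e_0|_Y)^i\cdot(\overline N|_Y)^{d+1-i}$ with $i\ge 2$ vanishes, while the $i=1$ term reduces via the projection formula to $(N|_Y)^d$. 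This yields
\[(\overline N')^{d+1}=(\overline N|_Y)^{d+1}-(d+1)\,c\,(N|_Y)^d>0,\]
which is Siu's arithmetic bigness inequality for the nef pair $(\overline N|_Y,\,c\,\pi^*\overline e_0|_Y)$. The arithmetic Hilbert--Samuel theorem of Gubler then produces, for $m$ large, a non-zero section $s\in H^0(Y,mN)$ whose supremum norms satisfy $\|s\|_{\sup,\nu}\le 1$ at every place $\nu$. For $x\in Y(\overline K)\setminus\div(s)$,
\[h_{\overline N'}(x)=-\frac{1}{m[K(x):K]}\sum_{\nu}\log\|s(x)\|_{\nu}\ge 0,\]
so $h_{\overline N}(x)\ge c$ on the nonempty open $Y\setminus\div(s)$, forcing $\lambda_1(Y,\overline N)\ge c$ and contradicting the choice of $c$.

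The main obstacle is the arithmetic Hilbert--Samuel / bigness step, which converts positivity of the top self-intersection into an arithmetically small section. In the function-field setting this is Gubler's contribution and requires coordinating positivity across all places simultaneously, including places of bad reduction where vertical components can in principle absorb the positivity. Once that theorem is in hand the remainder of the argument is formal, and letting $c\nearrow h_{\overline N}(Y)$ concludes the proof.
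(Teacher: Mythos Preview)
Your proposal is correct and matches the paper's treatment: the paper does not prove this proposition but simply cites Zhang~\cite[Theorem 1.1]{Z95} and Gubler~\cite[Theorem 4.1]{G07}, exactly as you do in your opening sentence. The sketch you add beyond the citation is a faithful outline of the standard argument (nonnegativity from nef model approximation; the essential-minimum bound via twisting by a constant metric and invoking an arithmetic Hilbert--Samuel / bigness theorem to produce a small section), and your identification of the Hilbert--Samuel step as the substantive input is accurate.
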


\subsection{Abelian varieties and Chow's $K/k$-trace and image}\label{tracesubsection}

Proofs of the existence and properties of the trace and image can be found in \cite{langab} and \cite{conradtrace}. Let $A$ be an abelian variety defined over $K$. The $K/k$-image $\left(\Im_{K/k}(A),\lambda\right)$ consists of an abelian variety $\Im(A)$ over $k$ and a surjective morphism 
\[\lambda:A\longrightarrow\Im_{K/k}(A)_K\]
with the following universal property: If $V$ is an abelian variety defined over $k$, and $\phi:A\to V_K$ a morphism, then $\phi$ factors through $\lambda$. Provided the fields $K$ and $k$ are clear, we will often drop the $K/k$ subscript and just write $\Im(A)$. 

The $K/k$-trace is $\left(\Tr_{K/k}(A),\tau\right)$ where $\Tr_{K/k}(A)$ is an abelian variety over $k$, and 
\[\tau:\Tr_{K/k}(A)_K\longrightarrow A\]
is universal among all morphisms from $k$-abelian varieties to $A$. Again we will often drop the $K/k$ when the fields are unambiguous. The image can be thought of as the largest quotient of $A$ that can be defined over $k$ and the trace the largest abelian subvariety that can be defined over $k$. This heuristic is literally true in characteristic zero, but in positive characteristic the trace map may have an infinitesimal kernel; see \cite[Section 6]{conradtrace}.

These constructions are dual to each other in the sense that 
\[\Tr(A^{\vee})=\Im(A)^{\vee},\]
and the image and trace are isogenous via the composition $\lambda\circ\tau$ (descended to the $k$-varieties). 

Given a morphism of abelian varieties $f:A\to B$, we get morphisms $f_{\Tr}:\Tr(A)\to\Tr(B)$ and $f_{\Im}:\Im(A)\to\Im(B)$ commuting with $\tau$ and $\lambda$. 

Now suppose $X$ is a geometrically normal projective variety over $K$ of dimension $n$, and assume that $K$ is large enough so that $X(K)$ is non-empty. We write $\cPic_X$ for the Picard scheme of $X$, representing the Picard functor on $X$. This scheme exists (i.e. the Picard functor is representable), and its reduced neutral component, denoted $\cPic^0_{red,X}$, is an abelian variety~\cite{Kleiman},~\cite[Lec. 236]{FGA}. Note that we do require the reduction, as $\cPic^0_X$ may fail to be reduced in positive characteristic. Write $\Pic(X)$ and $\Pic^0(X)$ for the abelian groups of $K$ points of $\cPic_X$ and $\cPic^0_{red,X}$, respectively.

We can then define $\Alb_X$, called the Albanese variety of $X$, to be the abelian variety dual to $\cPic^0_{red,X}$. Choosing a point $x_0\in X(K)$ fixes an Albanese morphism 
\[\iota:X\longrightarrow \Alb_X\]
taking $x_0$ to $0$, and then $(\Alb_X,\iota)$ uniquely satisfies the Albanese universal property: any morphism from $X$ to an abelian variety taking $x_0$ to zero must factor through $\iota$~\cite{Wittenberg08}.

We now have the language to differentiate between metrized line bundles defined over the constant field $k$ and those which are not. Define a group homomorphism
\[\htau_{K/k}:\Tr_{K/k}\left(\cPic^0_{red,X}\right)(k)\longrightarrow\widehat\Pic(X)\]
as follows.
First, by the duality of the $K/k$-trace and image,
\[\Tr_{K/k}\left(\cPic^0_{red,X}\right)(k)=\Pic^0\left(\Im_{K/k}(\Alb_X)\right)\]
Then we can map
\[\Pic^0\left(\Im_{K/k}(\Alb_X)\right)\hooklongrightarrow
\Pic\left(\Im_{K/k}(\Alb_X)\right)\longrightarrow
\Pic\left(\Im_{K/k}(\Alb_X)\times_kB\right),\]
where the map on the right is the pullback of projection onto the first factor. Since $\Im_{K/k}(\Alb_X)$ is defined over $k$, the fibered product $\Im_{K/k}(\Alb_X)\times_kB$ is a model for $\Im_{K/k}(\Alb_X)\times_kK$, and thus we get a map
\[\Pic\left(\Im_{K/k}(\Alb_X)\times_kB\right)\longrightarrow
\widehat\Pic\left(\Im_{K/k}(\Alb_X)_K\right)\]
given by taking model metrics. Finally, $X$ maps to $\Im_{K/k}(\Alb_X)_K$ via the Albanese map followed by the image map, and pulling this back gives
\[\widehat\Pic\left(\Im_{K/k}(\Alb_X)_K\right)\longrightarrow\widehat\Pic(X).\]
We can thus define $\htau_{K/k}$ as the composition of the above maps. While it took several steps to formally define $\htau_{K/k}$, it is very natural; if we define 
\[\phi:\widehat\Pic(X)\longrightarrow\Pic(X)\]
by forgetting the metric, then
\[\phi\circ\htau_{K/k}=\tau_{K/k},\]
the $K/k$-trace morphism (on field valued points), and the image of this composition lands in $\Pic^0(X)$. To simplify notation, we write $\Tr_{K/k}(\Pic^0(X))$ to mean the image of $\htau_{K/k}$ in $\widehat\Pic(X)$. By construction $\Tr_{K/k}(\Pic^0(X))$ is numerically trivial, as on every fiber $X_{\nu}$ this group restricts to $\Tr_{K/k}(\cPic_{red,X}^0)_{K_{\nu}}(k)$, which is algebraically trivial.

%%%%%%%%%%%%%%%%%%%%%%%%%%%
%%%%%%%%%%%%%%%%%%%%%%%%%%%
%%%%%%%%%%%%%%%%%%%%%%%%%%%
%%%%%%%%%%%%%%%%%%%%%%%%%%%

\section{Proof of Hodge Index Theorem}\label{hitproof}

\subsection{Statement of results}

Let $k$ be any algebraically closed field, let $B$ be a smooth projective curve over that field, and let $K=k(B)$ be the corresponding function field. Let $X$ be a geometrically normal projective variety over $K$ of dimension $n$, and assume that $K$ is large enough so that $X(K)$ is non-empty. Then choosing a point $x\in X(K)$ we may fix an Albanese morphism $\iota:X\to\Alb_X$. We impose these conditions on $X$ as well as this choice of Albanese morphism throughout the rest of the paper.

We can now state our main theorem:
\begin{theorem}[Arithmetic Hodge Index Theorem for function fields]\label{hodgeindex}
Let $\overline{M}$ be an integrable adelic $\Q$-line bundle on $X$ and $\overline{L}_1,\dots,\overline{L}_{n-1}$ nef adelic $\Q$-line bundles on $X$. Suppose if $n\ge2$ that $M\cdot L_1\dots L_{n-1}=0$ and each $L_i$ is big, or that $\deg M=0$ if $n=1$. Then
\[\overline{M}^2\cdot\overline{L}_{1}\dots\overline{L}_{n-1}\le0.\] 
Further, if every $\overline{L}_{i}$ is arithmetically positive, and $\overline M$ is $\overline L_i$-bounded for every $i$, then
\[\overline{M}^2\cdot\overline{L}_{1}\dots\overline{L}_{n-1}=0\]
if and only if 
\[\overline M\in\pi^*\widehat{\Pic}(K)_{\Q}+\Tr_{K/k}\left(\Pic^0(X)\right)_{\Q}.\]
When $n=1$ so that $X$ is a curve,
\[\overline M^2=-2h_{\NT}(M),\]
where $h_{\NT}$ is the N\'eron-Tate height on the Jacobian of $X$.
\end{theorem}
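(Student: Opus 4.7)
The plan is to handle the curve case directly via a Faltings--Hriljac style argument, and then bootstrap to higher dimension by induction, combining a Cauchy--Schwarz inequality with Seidenberg's Bertini theorem. The equality case requires extra care because approximation by model metrics does not preserve the vanishing hypothesis.

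\textbf{Curves.} Suppose $n=1$ and $\deg M = 0$. By Lemma \ref{flatlemma}, $M$ admits a flat metric, so write $\overline M = \overline M_{\mathrm{flat}} + \overline V$ with $\overline V$ vertical. Corollary \ref{flatzerocor} gives $\overline M_{\mathrm{flat}} \cdot \overline V = 0$, so
\[\overline M^2 = \overline M_{\mathrm{flat}}^2 + \overline V^2.\]
The vertical term is non-positive by the local Hodge Index Theorem \cite[Thm 2.1]{yz} applied place by place, with equality forcing $\overline V$ to be constant on each fiber, i.e.\ $\overline V \in \pi^*\widehat{\Pic}(K)_\Q$. For the flat term, $M$ corresponds to a $K$-point of $\Jac(X)$, and I would realize $\overline M_{\mathrm{flat}}$ as the pullback via the Abel--Jacobi map of the admissible (Poincar\'e) metrized line bundle on $\Jac(X)$, following the Faltings/Hriljac recipe, to obtain $\overline M_{\mathrm{flat}}^2 = -2 h_{\mathrm{NT}}(M)$. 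Non-negativity of $h_{\mathrm{NT}}$ gives the inequality; equality in the flat piece combined with the Lang--N\'eron theorem forces $M \in \Tr_{K/k}(\Jac(X))_\Q = \Tr_{K/k}(\Pic^0(X))_\Q$, yielding the desired characterization.

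\textbf{Inequality in higher dimension.} Induct on $n$. First establish a Cauchy--Schwarz inequality: granting the inequality in all dimensions $\le n$, applied to $t\overline M + \overline N$ for varying $t$ (with $\overline N$ perturbed by a pullback from $\Spec K$ to enforce the vanishing hypothesis), one gets
\[\bigl(\overline M \cdot \overline N \cdot \overline L_1 \cdots \overline L_{n-1}\bigr)^2 \le \bigl(\overline M^2 \cdot \overline L_1 \cdots \overline L_{n-1}\bigr)\bigl(\overline N^2 \cdot \overline L_1 \cdots \overline L_{n-1}\bigr).\]
For the induction step, approximate each $\overline L_i$ by nef model metrics and reduce to the case where $\overline L_{n-1}$ is a model metric coming from an ample line bundle $\calL_{n-1}$ on a model $\cX$. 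By Seidenberg's Bertini theorem, after passing to a large multiple of $\calL_{n-1}$ we can find a section $s$ whose divisor is a horizontal, geometrically integral, (after finite base change) geometrically normal subvariety $\cY$ with generic fiber $Y$. Expanding $\overline M^2 \cdot \overline L_1 \cdots \overline L_{n-1}$ via the induction formula yields $\overline M|_Y^2 \cdot \overline L_1|_Y \cdots \overline L_{n-2}|_Y$ plus an integral term; after renormalizing $\overline M|_Y$ by a constant so that the numerical hypothesis $M|_Y \cdot L_1|_Y \cdots L_{n-2}|_Y = 0$ holds (the change in the constant does not affect the sign), the induction hypothesis handles the restricted intersection, while the integral term is bounded using local positivity (cf.\ the argument of \cite{yz}).

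\textbf{Equality and the main obstacle.} Assume the arithmetic positivity and $\overline L_i$-boundedness, and equality $\overline M^2 \cdot \overline L_1 \cdots \overline L_{n-1} = 0$. By Cauchy--Schwarz, $\overline M \cdot \overline N \cdot \overline L_1 \cdots \overline L_{n-1} = 0$ for every integrable $\overline N$ with $N \cdot L_1 \cdots L_{n-1} = 0$. Cutting by a Bertini section of $L_{n-1}$ yields $\overline M|_Y^2 \cdot \overline L_1|_Y \cdots \overline L_{n-2}|_Y = 0$ on the normal subvariety $Y$, and the induction hypothesis gives $\overline M|_Y \in \pi^*\widehat{\Pic}(K)_\Q + \Tr_{K/k}(\Pic^0(Y))_\Q$. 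The principal difficulty is now lifting this statement from $Y$ to $X$: one must show that a numerically trivial adelic bundle on $X$ whose restriction to enough Bertini hypersurfaces descends to $k$ modulo constants must itself descend. I would decompose $\overline M$ into its flat and vertical pieces using (a higher-dimensional analogue of) Lemma \ref{flatlemma} applied through the Albanese map, arguing that the flat piece comes from $\Tr_{K/k}(\Pic^0(X))_\Q$ by exploiting functoriality of $\htau_{K/k}$ under the restriction $\iota_Y^*: \Alb_Y \to \Alb_X$ and Lefschetz-type surjectivity of $\Pic^0(X) \to \Pic^0(Y)$ up to isogeny, while the vertical piece is controlled by the local Hodge Index Theorem at each place. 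The hardest point is precisely this commutation of the $K/k$-trace functor with restriction to Bertini hypersurfaces: since the equality hypothesis cannot be propagated to model approximations of $\overline M$, one must argue directly on the level of trace/image abelian varieties rather than through a limiting procedure, and this is where the function-field setting genuinely diverges from the number-field arguments of Yuan--Zhang.
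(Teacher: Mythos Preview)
Your overall architecture matches the paper's: curves via Faltings--Hriljac plus the local Hodge index, inequality by Bertini induction, equality by a separate flat/vertical splitting and a lifting argument from $Y$ to $X$. Two points deserve correction.

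\textbf{Inequality.} Your handling of the restriction step is more complicated than necessary and not quite right. The hypothesis $M|_Y\cdot L_1|_Y\cdots L_{n-2}|_Y=0$ holds automatically (it equals $M\cdot L_1\cdots L_{n-1}$ on $X$, since $Y$ represents $L_{n-1}$), so no ``renormalization'' is needed---and adding a constant to the metric would not change a numerical intersection of underlying bundles anyway. More seriously, if you only approximate the $\overline L_i$ and leave $\overline M$ adelic, the integral term $\int\log\|s\|\,c_1(\overline M)^2c_1(\overline L_1)\cdots c_1(\overline L_{n-2})$ involves a signed measure and has no evident sign. The paper sidesteps this entirely by approximating $\overline M$ by model metrics as well, so that the whole computation becomes a classical intersection on a model $\cX$, where $\cM^2\cdot\calL_1\cdots\calL_{n-1}=\cM|_\cY^2\cdot\calL_2|_\cY\cdots\calL_{n-1}|_\cY$ with no residual term.

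\textbf{Equality.} You correctly isolate the crux: lifting $\overline M|_Y\in\pi^*\widehat{\Pic}(K)_\Q+\Tr_{K/k}(\Pic^0(Y))_\Q$ back to the analogous statement on $X$. But your proposed mechanism, ``Lefschetz-type surjectivity of $\Pic^0(X)\to\Pic^0(Y)$ up to isogeny,'' fails exactly at the base of the induction: when $\dim X=2$, $Y$ is a curve and $\Pic^0(Y)=\Jac(Y)$ is typically much larger than $\Pic^0(X)$, so the restriction map is far from surjective. The paper's actual device is different and does not need surjectivity. One first uses \cite[Lemma~3.7]{yz} to see that $M$ is numerically trivial, hence has a flat metric, and reduces to the flat case. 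Then the key input is a lemma about traces (Lemma~\ref{traceint}): for any morphism $f:A\to B$ of abelian varieties over $K$, one has $f(A(K))_\Q\cap\tau_B(\Tr(B)(k))_\Q=(f\circ\tau_A)(\Tr(A)(k))_\Q$. Applied to the restriction $\cPic^0_{red,X}\to\cPic^0_{red,Y}$, this says that the trace part of $\overline M|_Y$, being by construction in the image of restriction, lifts to $\Tr_{K/k}(\Pic^0(X))_\Q$. What remains is in the kernel of $\Pic^0(X)\to\Pic^0(Y)$, which is \emph{finite} (this is where a Lefschetz-type fact enters, but on the kernel side), so after clearing a denominator the difference is vertical; Cauchy--Schwarz and the local Hodge index then force it into $\pi^*\widehat{\Pic}(K)_\Q$.
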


Note the important case that when $k$ is finite, $\Tr_{K/k}\left(\Pic^0(X)\right)_{\Q}$ is zero.

Call a metrized line bundle $\overline M$ on $X$ numerically trivial if 
\[\overline M\cdot \overline L_1\cdots \overline L_n=0\]
for every choice of metrized line bundles $\overline L_1,\dots,\overline L_n$. The classical Hodge Index Theorem says that the only divisors on a surface with zero self intersection are the numerically trivial divisors. We show that that is nearly, but not quite the case here:
\begin{theorem}\label{numericallytrivial}
The following three subgroups of $\widehat{\Pic}(X)$ are equal:
\begin{enumerate}
\item
The numerically trivial elements of $\widehat{\Pic}(X)_{\Q}$.
\item 
The set of $\overline M\in\widehat{\Pic}(X)_{\Q}$ such that the height $h_{\overline M}$ is identically zero on $X(\overline K)$.
\item
$\pi^*\widehat{\Pic}^0(K)_{\Q}+\imP{X}_{\Q},$ where $\widehat{\Pic}^0(K)_{\Q}$ is defined to be the elements of $\widehat{\Pic}(K)_{\Q}$ with arithmetic degree zero.
\end{enumerate}
\end{theorem}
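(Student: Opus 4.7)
My plan is to establish the four inclusions $(3)\subseteq(1)$, $(3)\subseteq(2)$, $(1)\subseteq(3)$, and $(2)\subseteq(3)$. The first two are direct. For $\pi^*\overline N_0$ with $\widehat{\deg}\overline N_0=0$, the projection formula gives both $\pi^*\overline N_0\cdot \overline L_1\cdots\overline L_n=\widehat{\deg}(\overline N_0)\cdot(L_1\cdots L_n)=0$ and $h_{\pi^*\overline N_0}(x)=\widehat{\deg}\overline N_0=0$. For $\overline N\in\imP{X}$ obtained from $\xi\in\Pic^0(\Im_{K/k}(\Alb_X))$ via $\htau_{K/k}$, unwinding the construction shows $\overline N$ carries a model metric induced from $p_1^*\xi$ on the constant model $\Im_{K/k}(\Alb_X)\times_k B$. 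Passing to a common model $\cX$ mapping to $\Im_{K/k}(\Alb_X)\times_k B$ and pulling back, the line bundle realizing $\overline N$ has numerically trivial first Chern class (since $\xi$ is algebraically trivial), so its classical arithmetic intersection with any other line bundles vanishes. The height of $x\in X(\overline K)$ similarly reduces to the degree on a smooth cover $B'\to B$ of the pullback of $\xi$ along the morphism $B'\to\Im_{K/k}(\Alb_X)$ associated to $i\circ\lambda(x)$, which lies in $\Pic^0(B')$ and has degree zero.

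For $(1)\subseteq(3)$, let $\overline M$ be numerically trivial and pick any arithmetically positive $\overline L$. Numerical triviality gives $M\cdot L^{n-1}=0$ (by intersecting with an $n$-th factor of the form $\pi^*\overline O$ with $\widehat{\deg}\overline O\neq 0$ and applying the projection formula) and $\overline M^2\cdot\overline L^{n-1}=0$. Integrability of $\overline M$ as a difference of two nef bundles makes $\overline M$ automatically $\overline L$-bounded, so the equality case of Theorem~\ref{hodgeindex} places $\overline M\in\pi^*\widehat{\Pic}(K)_\Q+\imP{X}_\Q$. Writing $\overline M=\pi^*\overline N_0+\overline N_1$ and intersecting with $\overline L^n$, and using $(3)\subseteq(1)$ just proven to annihilate the $\overline N_1$ contribution, forces $\widehat{\deg}\overline N_0=0$ and thus $\overline M\in(3)$.

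The remaining inclusion $(2)\subseteq(3)$ is the heart of the theorem, and I will prove it by induction on $n=\dim X$. When $n=1$, Lemma~\ref{flatlemma} yields a decomposition $\overline M=\overline M_{\mathrm{flat}}+\overline M_{\mathrm{vert}}+\pi^*\overline M_0$; by Corollary~\ref{flatzerocor} the three summands are mutually orthogonal, and the curve case of Theorem~\ref{hodgeindex} identifies $\overline M_{\mathrm{flat}}^2$ with $-2h_{\NT}(M)$ on $\Jac X$. Vanishing of $h_{\overline M}$ at every point forces the Jacobian N\'eron--Tate height of $M$ to vanish, so by Chow's theorem $M$ lies in $\Tr_{K/k}(\Jac X)$ and its admissible metric agrees with the canonical one coming from $\imP{X}$; the local Hodge Index Theorem at each place forces $\overline M_{\mathrm{vert}}$ to be constant, and evaluation of $h_{\overline M}$ at any point then gives $\widehat{\deg}\overline M_0=0$.

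For $n\geq 2$, I apply Seidenberg's Bertini theorem to a sufficiently positive multiple of an arithmetically positive $\overline L$ to cut out a geometrically normal, geometrically integral subvariety $Y\subset X$ of dimension $n-1$. Heights restrict trivially, so $h_{\overline M|_Y}\equiv 0$ on $Y(\overline K)$, and by the inductive hypothesis $\overline M|_Y\in\pi_Y^*\widehat{\Pic}^0(K)_\Q+\Tr_{K/k}(\Pic^0(Y))_\Q$. The main obstacle of the proof is lifting this conclusion back to $X$: one must establish enough compatibility between the $K/k$-trace and $K/k$-image functors and the restriction $Y\hookrightarrow X$ to recover the membership of $\overline M$ in (3) on $X$ from that of its restrictions to a sufficiently generic family of such $Y$. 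This is precisely the ``commuting with the $K/k$-trace functor'' step flagged in the introduction, and it is especially delicate in positive characteristic, where $\tau_{K/k}$ may have an infinitesimal kernel.
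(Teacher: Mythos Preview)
Your inclusions $(3)\subseteq(1)$, $(3)\subseteq(2)$, and $(1)\subseteq(3)$ are correct and essentially what the paper does. One caveat: the sentence ``Integrability of $\overline M$ as a difference of two nef bundles makes $\overline M$ automatically $\overline L$-bounded'' is false as written---writing $\overline M=\overline M_1-\overline M_2$ with $\overline M_i$ nef does not give you $m\overline L-\overline M_i$ nef for any $m$. The paper also applies Theorem~\ref{hodgeindex} to a numerically trivial $\overline M$ without verifying boundedness, so this is not a point on which you diverge from the paper, but you should not claim it as automatic.

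Where you do diverge is in $(2)\subseteq(3)$. The paper does \emph{not} run a separate induction here. Instead it observes in one line that $(1)=(2)$: any point of $X(\overline K)$ can be cut out by sections of line bundles, so heights of points are intersection numbers and conversely intersection numbers against ample classes can be read off from heights; hence vanishing of all arithmetic intersections and vanishing of all heights are equivalent. Then $(2)\subseteq(3)$ is immediate from $(1)\subseteq(3)$, which you have already proven via the equality case of Theorem~\ref{hodgeindex}. Your proposed induction on $n$ for $(2)\subseteq(3)$ is therefore redundant: it replicates, step for step, the induction already carried out in the proof of the equality case of Theorem~\ref{hodgeindex} (Bertini section $Y$, restriction, then lifting). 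In particular, the ``main obstacle'' you flag---lifting the trace contribution from $Y$ back to $X$---is precisely Lemma~\ref{traceint}, already proven inside the Hodge Index argument, and not a new difficulty at all.

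Finally, your curve case is correct but the key step is underspecified. The assertion ``vanishing of $h_{\overline M}$ at every point forces the Jacobian N\'eron--Tate height of $M$ to vanish'' needs the observation that for flat $\overline M_0$ one has $h_{\overline M_0}(x)-h_{\overline M_0}(x_0)=\overline M_0\cdot\overline{\cO(x-x_0)}=-\langle M,[x-x_0]\rangle_{\NT}$ (the integral term in the induction formula dies because $c_1(\overline M_0)=0$), so that vanishing of $h_{\overline M_0}$ forces $M$ into the kernel of the N\'eron--Tate pairing. The paper handles the vertical part separately and more explicitly than you do, by noting that for a model metric on a curve a vertical divisor has constant height on $X(\overline K)$ if and only if it is a sum of whole fibers.
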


Define $\Pic^{\tau}(X)$ to be the group of isomorphism classes of numerically trivial line bundles on $X$. We define a pairing on $\Pic^\tau(X)$ to give an $\R$-linear version of Theorem~\ref{hodgeindex}. Let $M,N\in\Pic^{\tau}(X)_{\R}$, and let $L_1,\dots,L_{n-1}\in\Pic(X)_{\Q}$ be nef. Then define a pairing by
\[\langle M,N\rangle_{L_1,\dots,L_{n-1}}:=\overline M\cdot\overline N\cdot\overline L_1\cdots\overline L_{n-1},\]
using any choice of flat metrics on $M$ and $N$, and any choice of metrics on $L_i$. By Lemma 5.19 of \cite{yz}, (proven as a simple consequence of Lemma~\ref{flatzeromeasure} here) this pairing does not depend on the choice of metric.

\begin{theorem}\label{rlinear}
For any $M\in\Pic^{\tau}(X)_\R$ and nef $L_1,\dots,L_{n-1}\in\Pic(X)_{\Q}$, 
\[\langle M,M\rangle_{L_1,\dots,L_{n-1}}\le0.\]
Further, if every $L_i$ is ample, then equality holds if and only if $M\in\imP{X}_{\R}$.
When $X$ is a curve, 
\[\langle\cdot,\cdot\rangle=-2\langle\cdot,\cdot\rangle_{\NT},\]
where $\langle\cdot,\cdot\rangle_{\NT}$ is the N\'eron-Tate height pairing on the Jacobian of $X$.
\end{theorem}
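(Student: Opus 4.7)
The plan is to deduce Theorem~\ref{rlinear} from Theorem~\ref{hodgeindex} by using flat metrics to make the pairing canonical, applying the $\Q$-version pointwise on rational classes, and extending to $\R$-coefficients via a finite-dimensional linear algebra reduction. First I would verify that $\langle M, N\rangle_{L_1,\dots,L_{n-1}}$ is a well-defined $\R$-bilinear form on $\Pic^\tau(X)_\R$: flat metrics on numerically trivial line bundles exist by Lemma~\ref{flatlemma}, and by Corollary~\ref{flatzerocor} the intersection is independent of both the choice of flat metric (unique up to a constant factor) and the choice of metrics on the $L_i$. Any $M \in \Pic^\tau(X)_\R$ lies in a finite-dimensional subspace $V = V_0 \otimes_\Q \R$ with $V_0 \subset \Pic^\tau(X)_\Q$ finite-dimensional, so it suffices to analyze the quadratic form $q := \langle \cdot, \cdot\rangle_{L_1,\dots,L_{n-1}}$ restricted to such a $V$.

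For the inequality, take $M \in V_0$ equipped with its flat metric $\overline M$; numerical triviality of $M$ forces $M \cdot L_1 \cdots L_{n-1} = 0$ automatically, so Theorem~\ref{hodgeindex} applies. When the nef $L_i$ fail to be big, first replace each by $L_i + \epsilon A$ for an ample $A$, apply Theorem~\ref{hodgeindex}, and let $\epsilon \to 0$ by continuity of intersection numbers. This gives $q(M, M) \le 0$ on $V_0$, and continuity of $q$ on the finite-dimensional $V$ together with density of $V_0$ promotes the inequality to all of $V$, hence to $\Pic^\tau(X)_\R$.

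For the equality case with each $L_i$ ample, the containment $\imP{X}_\R \subseteq \ker q$ is immediate from the construction of $\htau_{K/k}$ in Section~\ref{tracesubsection}: classes in $\Tr_{K/k}(\Pic^0(X))$ carry canonical flat metrics defined via pullback from models over $B$, which intersect trivially against any integrable line bundle by Corollary~\ref{flatzerocor}. Conversely, Theorem~\ref{hodgeindex} applied to $V_0$ shows that $q(M, M) = 0$ forces $\overline M \in \pi^*\widehat{\Pic}(K)_\Q + \imP{X}_\Q$; forgetting metrics and using that $\Pic(\Spec K) = 0$ yields $M \in V_0 \cap \imP{X}_\Q$, so $q$ descends to a negative definite form on $V_0 / (V_0 \cap \imP{X}_\Q)$. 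Since negative definiteness of a symmetric bilinear form on a finite-dimensional $\Q$-vector space extends under scalar extension to $\R$, the descended form on $V / (V_0 \cap \imP{X}_\Q)_\R$ remains negative definite, and $q(M, M) = 0$ for $M \in V$ then forces $M \in (V_0 \cap \imP{X}_\Q)_\R \subseteq \imP{X}_\R$. The curve identity follows by polarizing $\overline M^2 = -2h_{\NT}(M)$ from Theorem~\ref{hodgeindex} and extending $\R$-bilinearly.

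The main obstacle is a small bookkeeping check: the finite-dimensional reduction requires the identity $V \cap \imP{X}_\R = (V_0 \cap \imP{X}_\Q) \otimes_\Q \R$, which follows from flatness of $\R$ over $\Q$ (so that tensoring with $\R$ commutes with the $\Q$-intersection $V_0 \cap \imP{X}_\Q$ taken inside $\Pic^\tau(X)_\Q$). Beyond this, the proof is a straightforward assembly of Theorem~\ref{hodgeindex}, Lemma~\ref{flatlemma}, and Corollary~\ref{flatzerocor}.
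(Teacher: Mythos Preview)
Your inequality argument matches the paper's: both perturb nef $L_i$ to $L_i + \epsilon A$, apply Theorem~\ref{hodgeindex}, and pass to the limit. For the equality clause your route genuinely differs. The paper does not deduce the $\R$-case from the $\Q$-case by your finite-dimensional negative-definiteness-extends-under-$\otimes\R$ argument; instead it re-runs the geometric induction directly for $\R$-linear combinations of flat-metrized bundles, using Lemma~\ref{bertinisection} to restrict to a hypersurface $Y$ and Lemma~\ref{traceint} to lift from $\imP{Y}_\R$ back to $\imP{X}_\R$, noting that both lemmas are insensitive to whether the coefficients are rational. Your linear-algebra reduction is conceptually cleaner and avoids repeating the induction; the paper's approach stays closer to the geometry and, as noted below, avoids a hypothesis check you have skipped.

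There is one real gap. To invoke the equality clause of Theorem~\ref{hodgeindex} for $M \in V_0$ equipped with its flat metric $\overline M$, you need each $\overline L_i$ arithmetically positive \emph{and} $\overline M$ to be $\overline L_i$-bounded. You arrange the first by choice of metric, but you never verify the second, and boundedness is a metric condition that is not automatic from integrability. The paper's inductive route sidesteps this because Lemma~\ref{bertinisection} needs only flatness, and Theorem~\ref{rlinear} asserts a conclusion about the underlying $M$ rather than about $\overline M$; in the proof of Theorem~\ref{hodgeindex} the boundedness hypothesis is used only at the very last step, where the local Hodge index theorem pins down the vertical part of the metric, \emph{after} the relation $M \in \imP{X}_\Q$ has already been established at the line-bundle level. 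You can repair your argument by citing that earlier point in the proof rather than invoking the theorem as a black box. A smaller issue: Corollary~\ref{flatzerocor} concerns flat-times-vertical intersections and does not by itself justify $\imP{X}_\R \subset \ker q$; the correct input is the numerical triviality of $\imP{X}$ in $\widehat{\Pic}(X)$ noted at the end of Section~\ref{tracesubsection}.
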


These results are proven over the next three sections, with the bulk of the work going into proving Theorem~\ref{hodgeindex}, and then Theorems~\ref{numericallytrivial} and~\ref{rlinear} following as corollaries.

\subsection{Curves}

We begin when $X$ is a curve. Here we can work directly in $\widehat{\Pic}(X)$ as opposed to $\widehat{\Pic}(X)_{\Q}.$
Then the theorem discusses the self-intersection $\overline M^2$ when $\deg M=0$. By Lemma \ref{flatlemma}, $M$ has a flat metric $\overline M_0=(M,||\cdot||_0)$.

Let $\overline N$ be the vertical line bundle defined by 
\[\overline M=\overline M_0+\overline N.\]
Since $\overline M_0$ is flat, $\overline M_0\cdot \overline N=0$ so that 
\[\overline M^2=\overline M_0^2+\overline N^2=\overline M_0^2+\sum_{\nu}\overline N^2_{\nu},\]
where $\overline N_{\nu}$ is the restriction of of $\overline N$ to $X_{\nu}:=X\otimes_KK_{\nu}$ for each place $\nu$ of $K$ (i.e closed point of $B$).
Now $\overline N_{\nu}^2\le0$ with equality if and only if $\overline N_{\nu}$ is constant by the local hodge index theorem, \cite{yz} Theorem 2.1. 
Hence 
\[\sum_{\nu}\overline N^2_{\nu}\le0,\]
with equality if and only if $\overline N\in \pi^*\widehat{\Pic}(K)$. 

Next, we consider $\overline M_0^2$. Since $M$ has degree zero, it corresponds naturally to a $K$-point on the Jacobian, $\Jac_X$, of $X$. Given any two points $P,Q\in\Jac_X(K)$, let $L_P$ and $L_Q$ be the corresponding algebraically trivial line bundles on $X$. These each have a flat metric, $\overline L_P$ and $\overline L_Q$, unique up to constant metric, and thus we get a well defined symmetric bilinear pairing
\[(P,Q)\mapsto -\overline L_P\cdot \overline L_Q\]
on $\Jac_X(K)$, as the intersection does not depend on the choice of flat metric. As is noted in \cite{faltings} and \cite{hriljac} in the arithmetic setting, this pairing is exactly the N\'eron-Tate height pairing. Then the Shioda-Tate Theorem \cite[Theorem 7]{Shioda} states that this pairing descends to a positive definite pairing on $\Jac_X(K)_{\Q}/\Tr_{K/k}(\Jac_X)(k)$, and
\[\overline M^2=-2h_{\text{NT}}(M).\]

Thus we are done, provided that we verify that our map $\htau_{K/k}$ always produces elements of $\widehat\Pic(X)$ with flat metrics, so that our pairing on $\Tr_{K/k}(\Pic^0(X))$ matches that considered by Shioda. But this is clear: $\Tr_{K/k}(\Pic^0(X))$ is numerically trivial, so in particular has trivial intersection with all vertical line bundles, making it flat.

As this also extends $\R$-linearly, this completes the proof of Theorems ~\ref{hodgeindex} and~\ref{rlinear} in dimension 1, and Theorem~\ref{numericallytrivial} in dimension one when $\overline M$ is flat. We now complete the proof of Theorem~\ref{numericallytrivial} in dimension 1.

Suppose that $\overline M$ is given as a model metric by some $\cM$ on a model $\cX\to B$ for $X$. The general case follows from this case by taking limits of such metrics. If $\overline M$ is vertical, then a section of $\cM$ corresponds to a sum of components of fibers of $\cX\to B$, and $h_{\overline M}(x)$ is the intersection of this sum with the closure of $x$ in $\cX$. This is constant for all $x\in X(\overline K)$ if and only if the sum consists only of a constant multiple of whole fibers $\cX_{\nu}$, in which case $\overline M=\pi^*\overline N$ for some $\overline N\in\widehat{\Pic}(K)$, and then the height is the arithmetic degree of $\overline N$.

Finally, we consider heights $h_{\overline M}$ where $\deg M\ne0$. Suppose $\deg M>0$. Then $M$ is ample, and $h_{\overline M}$ is unbounded, thus non-constant. Replacing $\overline M$ with $-\overline M$ covers $\deg M<0$.

\subsection{Inequality and Cauchy-Schwarz}

We now prove the inequality part of Theorem \ref{hodgeindex} by induction on $n=\dim X$, and get a version of the Cauchy-Schwarz inequality as a corollary. As in \cite[Section 3.3 assumption (2)]{yz}, we may assume that each $\overline{L_i}$ is arithmetically positive (instead of just big) by a limiting argument. Thus we may assume $L_i$ is ample.

Since $\overline M$ and each $\overline L_i$ can be approximated by model metrics, it suffices to prove 
\[\cM^2\cdot \calL_1\cdots\calL_{n-1}\le0,\]
under the assumption that $\cM$ and every $\calL_i$ are line bundles on a model $\cX$ for $X$, that $\calL_i$ is ample with respect to $k$, and that the intersection $\cM_K\cdot (\calL_{1})_K\cdots(\calL_{n-1})_K$ on the generic fiber is zero. 

Replacing $\calL_1$ by a positive tensor power if necessary, we may assume it is very ample. Then by a Bertini-type result of Seidenberg \cite[Theorem 7']{Seidenberg}, a generic section of $\calL_1$ cuts out an integral normal subvariety $\cY$ of $\cX$, and we may further stipulate that $\cY$ is horizontal. Then
\[
\cM^2\cdot \calL_1\cdots\calL_{n-1}=\cM|^2_{\cY}\cdot \calL_1|_{\cY}\cdots\calL_{n-2}|_{\cY}.
\]
This reduces the problem to a lower dimension, but we require that $\cY_K$ have a $K$ point to conclude the result by induction. This is certainly true if we replace $K$ with a finite extension $K'$, or equivalently replace $B$ with a finite cover. Since intersection numbers simply scale by $[K':K]$ and the subgroup $\widehat\Pic(K)+\Tr_{K/k}\left(\Pic^0(X)\right)$ is equal to $\widehat\Pic(K')+\Tr_{K'/k}\left(\Pic^0(X_{K'})\right)$ intersected with $\widehat\Pic(X)$, such a base change is permissible.

Given $M\in\Pic^{\tau}(X)_\R$, we can write it as an $\R$-linear combination of numerically trivial line bundles on $X$, and each has a flat metric by Lemma~\ref{flatlemma}. Then the inequality of Theorem~\ref{hodgeindex} immediately implies the inequality of Theorem~\ref{rlinear} when every $L_i$ is big. If $L_i$ is merely nef, choose any ample line bundle $A$ and $\epsilon>0$, and then $L_{i_\epsilon}:=L_i+\epsilon A$ is big. Thus the inequality holds with $L_{i,\epsilon}$ replacing $L_i$, and taking the limit as $\epsilon\to0$, it holds in general.

As a corollary, we have the following Cauchy-Schwarz inequality:

\begin{cor}\label{cauchyschwarz}
Let $\overline M,\overline N$ be two integral adelic line bundles on $X$, and let $\overline L_1,\dots,\overline L_{n-1}$ be nef adelic line bundles on $X$ such that
\[M\cdot L_1\cdots L_{n-1}=N\cdot L_1\cdots L_{n-1}=0.\]
Then
\[\left(\overline M\cdot\overline N\cdot \overline L_1\cdots\overline L_{n-1}\right)^2\le\left(\overline M^2\cdot \overline L_1\cdots\overline L_{n-1}\right)\left(\overline N^2\cdot \overline L_1\cdots\overline L_{n-1}\right).\]
\end{cor}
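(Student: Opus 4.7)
The plan is the standard discriminant argument used to derive Cauchy--Schwarz from any negative semidefinite bilinear form, applied here to the pairing $(\overline M, \overline N) \mapsto \overline M \cdot \overline N \cdot \overline L_1 \cdots \overline L_{n-1}$ restricted to the subspace of integrable adelic line bundles whose underlying $L$--line bundle intersection with $L_1 \cdots L_{n-1}$ vanishes.

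For each $t \in \Q$, consider the integrable adelic $\Q$-line bundle $\overline P_t := t\overline M + \overline N$. By bilinearity of ordinary intersection and the hypotheses on $M$ and $N$, one has $P_t \cdot L_1 \cdots L_{n-1} = 0$, so $\overline P_t$ satisfies the hypotheses of the inequality part of Theorem~\ref{hodgeindex} (which was just extended to nef $\overline L_i$ via the $L_i + \epsilon A$ approximation). Setting
\[
a := \overline M^2 \cdot \overline L_1 \cdots \overline L_{n-1}, \quad
b := \overline M \cdot \overline N \cdot \overline L_1 \cdots \overline L_{n-1}, \quad
c := \overline N^2 \cdot \overline L_1 \cdots \overline L_{n-1},
\]
expanding $\overline P_t^{\,2} \cdot \overline L_1 \cdots \overline L_{n-1}$ by bilinearity yields the inequality
\[
at^2 + 2bt + c \le 0 \qquad \text{for all } t \in \Q,
\]
and hence for all $t \in \R$ by continuity of both sides in $t$. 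Theorem~\ref{hodgeindex} applied to $\overline M$ and $\overline N$ individually also gives $a \le 0$ and $c \le 0$.

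Now the result is a direct quadratic analysis. If $a < 0$, the function $t \mapsto at^2 + 2bt + c$ is a downward-opening parabola everywhere bounded above by $0$; its discriminant must therefore be non-positive, so $4b^2 - 4ac \le 0$, which is exactly the desired inequality $b^2 \le ac$. If $a = 0$, the inequality becomes $2bt + c \le 0$ for all $t \in \R$; a non-constant affine function is unbounded above, forcing $b = 0$, and then $b^2 = 0 \le ac$ since $ac \ge 0$ (both factors being non-positive, with $a = 0$). In every case we obtain
\[
\bigl(\overline M \cdot \overline N \cdot \overline L_1 \cdots \overline L_{n-1}\bigr)^2 \le \bigl(\overline M^2 \cdot \overline L_1 \cdots \overline L_{n-1}\bigr)\bigl(\overline N^2 \cdot \overline L_1 \cdots \overline L_{n-1}\bigr).
\]

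Honestly, there is no serious obstacle here: once the negative semidefinite inequality of Theorem~\ref{hodgeindex} is available for all nef $\overline L_i$ and all integrable $\overline P$ with $P \cdot L_1 \cdots L_{n-1} = 0$, the Cauchy--Schwarz inequality is a purely formal consequence. The only mild point to check is that bilinearity and continuity let one pass from the $\Q$-line-bundle hypothesis in Theorem~\ref{hodgeindex} to arbitrary real $t$, which is immediate.
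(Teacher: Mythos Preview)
Your proof is correct and takes essentially the same approach as the paper: the paper simply says the result follows from the inequality part of Theorem~\ref{hodgeindex} together with ``the standard proof of the Cauchy--Schwarz inequality'' for the negative semidefinite pairing $\langle \overline M,\overline N\rangle_{\overline L_1,\dots,\overline L_{n-1}}:=\overline M\cdot\overline N\cdot \overline L_1\cdots\overline L_{n-1}$, and you have written out that standard discriminant argument explicitly. Your remark about passing to nef $\overline L_i$ via the $L_i+\epsilon A$ approximation is a reasonable clarification, since the paper's Theorem~\ref{hodgeindex} literally assumes the $L_i$ are big.
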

\begin{proof}
This follows from the inequality part of the Hodge Index Theorem proven above, and from the standard proof of the Cauchy-Schwarz inequality using the (negative semi-definite) inner product
\begin{equation*}\langle M,N\rangle_{\overline L_1,\dots,\overline L_{n-1}}:=\overline M\cdot\overline N\cdot \overline L_1\cdots\overline L_{n-1}.\qedhere\end{equation*}
\end{proof}

\subsection{Equality}
Proceeding now to the equality part of Theorem \ref{hodgeindex}, we assume that each $\overline L_i$ is arithmetically positive, that $\overline M$ is $\overline L_i$-bounded for all $i$, and that
\[\overline M^2\cdot \overline L_1\cdots\overline L_{n-1}=0.\] 
Note that as a consequence of the Cauchy-Schwarz inequality above, the set of metrized line bundles $\overline M$ satisfying these properties forms a group via tensor products.

By \cite[Lemma 3.7]{yz} (this uses the fact that $\overline L_i$ is arithmetically positive), $M$ is numerically trivial on $X$. Thus it has a flat metric; let $\overline M_0=(M,||\cdot||)$ be flat. Then, similar to the curve case, $\overline N:=\overline M-\overline M_0$ is vertical, and 
\[\overline M^2\cdot \overline L_1\cdots\overline L_{n-1}=\overline M_0^2\cdot \overline L_1\cdots\overline L_{n-1}+\overline N^2\cdot \overline L_1\cdots\overline L_{n-1}.\]
The inequality part of the hodge index theorem guarantees that both terms on the right are zero, and then by the local hodge index theorem at every place occurring in $\overline N$, we have $\overline N\in\widehat{\Pic}(K)_{\Q}$. Hence we are reduced to proving the statement in the flat metric case $\overline M=\overline M_0$.

We again replace $\overline L_1$ by a positive multiple to assume that $L_1$ is very ample, then apply Seidenberg's Bertini Theorem to conclude that $(L_1)_{\overline K}$ has a section $s$ which cuts out an integral, normal subvariety $Y$. Such $Y$ is defined over some finite extension of $K'/K$, and thus after a base change from $K$ to $K'$, we may assume that $L_1$ has a section which cuts out a geometrically integral and geometrically normal subvariety $Y$. As in the proof of the inequality, this finite extension merely scales the intersection numbers by a positive factor. We thus continue writing $K$, assuming it has been made large enough, to avoid excessive additional notation.

\begin{lemma}\label{bertinisection}
If $\overline M$ is flat, and $Y$ is a geometrically normal subvariety of $X$, then
\[\overline M|_Y^2\cdot\overline L_2|_Y\cdots\overline L_{n-1}|_Y=0.\]
\end{lemma}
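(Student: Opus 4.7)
The plan is to apply the global integration-by-parts (induction) formula of Section~\ref{adelicsubsection}, using the Bertini section $s$ (now living in $H^0(X,L_1)$ after the preceding base change and, if necessary, replacement of $L_1$ by a positive multiple) as the distinguished section. Concretely, I would expand
\[\overline L_1\cdot\overline M^2\cdot\overline L_2\cdots\overline L_{n-1}\]
via the induction formula with $\ell_0=s$. This produces a boundary term of the form $\overline M^2\cdot\overline L_2\cdots\overline L_{n-1}\cdot\div(s)$, which, since $s$ cuts $Y$ out as a reduced divisor, equals $\overline M|_Y^2\cdot\overline L_2|_Y\cdots\overline L_{n-1}|_Y$. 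It also produces a sum over places $\nu$ of local integrals of $\log\|s\|_\nu$ against the Chambert--Loir measure $c_1(\overline M)^2\,c_1(\overline L_2)\cdots c_1(\overline L_{n-1})$ on each $X_\nu^{\an}$.

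The crucial step, and the one place where the flatness hypothesis on $\overline M$ enters, is to kill the integral term. This is exactly what Lemma~\ref{flatzeromeasure} delivers: applied at each place $\nu$ with flat bundle $\overline M$ and the $n-1$ integrable bundles $\overline M,\overline L_2,\dots,\overline L_{n-1}$, it shows that the measure $c_1(\overline M)^2\,c_1(\overline L_2)\cdots c_1(\overline L_{n-1})$ vanishes identically on $X_\nu^{\an}$. Hence each local integral is zero independently of the size of $\log\|s\|_\nu$, and the induction formula collapses to
\[\overline M^2\cdot\overline L_1\cdots\overline L_{n-1}=\overline M|_Y^2\cdot\overline L_2|_Y\cdots\overline L_{n-1}|_Y.\]

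The left-hand side is zero by the standing hypothesis of the equality case of Theorem~\ref{hodgeindex}, so the right-hand side vanishes as required. I do not expect a serious obstacle: the delicate issues (choosing a Bertini section cutting out a geometrically integral, geometrically normal $Y$, and arranging $s$ to be defined over $K$ after a harmless finite base change) have already been dispatched in the paragraphs preceding the lemma, so the argument reduces to this one line plus the invocation of Lemma~\ref{flatzeromeasure}.
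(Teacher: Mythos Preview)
Your proposal is correct and follows exactly the paper's own argument: apply the induction formula with the section $s$ of $\overline L_1$, kill the integral terms via Lemma~\ref{flatzeromeasure} using flatness of $\overline M$, and conclude from the standing hypothesis $\overline M^2\cdot\overline L_1\cdots\overline L_{n-1}=0$.
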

\begin{proof}
By the induction formula of Chambert-Loir \cite{C-L}, recalled here in Section~\ref{adelicsubsection},
\[
\overline M^2\cdot\overline L_1\cdots \overline L_{n-1}
=\overline M|_{Y}^2\cdot \overline L_2|_{Y}\cdots\overline L_{n-1}|_{Y}\\-\sum_v\int_{X^{\an}_v}\log||s||_vc_1(\overline M)^2c_1(\overline L_2)\cdots c_1(\overline L_{n-1}).
\]
Since $\overline M$ is flat, all the integrals are zero.
\end{proof}

Thus we may assume
\[\overline M|_Y\in\pi^*\widehat{\Pic}(K)_{\Q}+\imP{Y}_{\Q}\]
by induction.

Write $\overline M|_Y=\overline M'+\pi^*\overline M_1$, with $\overline M'\in\imP{Y}_{\Q}$ and $\overline M_1\in\widehat{\Pic}(K)_{\Q}$. Then define $\overline M_2=\overline M-\overline \pi^*\overline M_1$.
Since $M$ is numerically trivial, replacing $\overline M$ by a positive integer multiple if necessary, we may further assume $M$ is algebraically trivial, and then that $M_1,M_2\in\Pic^0(X)$.

As noted earlier, if we drop the metric structure the map $\htau$ is simply the $K/k$-trace map on field valued points. The following lemma then proves that $M_2|_Y=M'$ lifts via the pullback of $Y\hookrightarrow X$ to an element of $\imP{X}_{\Q}$.

\begin{comment}

\[
\begin{tikzcd}
\Pic^0(\Im(\Alb(X)))_{\Q}\arrow{r}\arrow{d}&\Pic^0(\Im(\Alb(X))\times_kK)_{\Q}\arrow{r}\arrow{d}&\cdots\qquad\\
\Pic^0(\Im(\Alb(Y)))_{\Q}\arrow{r}&\Pic^0(\Im(\Alb(Y))\times_kK)_{\Q}\arrow{r}&\cdots\qquad
\end{tikzcd}
\]

\[
\begin{tikzcd}
\qquad\cdots\arrow{r}&\Pic^0(\Alb(X))_{\Q}\arrow{r}{\sim}\arrow{d}&\Pic^0(X)_{\Q}\arrow{d}\\
\qquad\cdots\arrow{r}&\Pic^0(\Alb(Y))_{\Q}\arrow{r}{\sim}&\Pic^0(Y)_{\Q}
\end{tikzcd}
\]

\[
\begin{tikzcd}
\Pic^0(\Im(\Alb(X)))_{\Q}\arrow{r}\arrow{d}&\Pic^0(\Im(\Alb(X))\times_kK)_{\Q}\arrow{r}\arrow{d}&\Pic^0(\Alb(X))_{\Q}\arrow{r}{\sim}\arrow{d}&\Pic^0(X)_{\Q}\arrow{d}\\
\Pic^0(\Im(\Alb(Y)))_{\Q}\arrow{r}&\Pic^0(\Im(\Alb(Y))\times_kK)_{\Q}\arrow{r}&\Pic^0(\Alb(Y))_{\Q}\arrow{r}{\sim}&\Pic^0(Y)_{\Q}
\end{tikzcd}
\]

 where the vertical maps come from the pullback of $Y\hookrightarrow X$ and its descent to the $K/k$-image. We show via the following lemma that $M_2|_Y$ lifts to an element of $\imP{X}_{\Q}$.
 \end{comment}
 
 \begin{lemma}\label{traceint}
 Let $f:A\to B$ be a morphism of abelian varieties defined over $K$. In the commutative diagram
 \[
 \begin{tikzcd}
 \Tr(A)(k)_{\Q}\arrow{r}{\tau_A}\arrow{d}{f_{\Tr}}&A(K)_{\Q}\arrow{d}{f}\\
 \Tr(B)(k)_{\Q}\arrow{r}{\tau_B}&B(K)_{\Q}
 \end{tikzcd}
 \]
$(f\circ \tau_A)\left(\Tr(A)(k)_{\Q}\right)$ is equal to $f(A(K)_{\Q})\cap\tau_B\left(\Tr(B)(k)_{\Q}\right)$. 
 
 \end{lemma}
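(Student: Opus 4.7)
The plan is to establish the two inclusions separately. The forward inclusion $(f\circ\tau_A)(\Tr(A)(k)_{\Q}) \subseteq f(A(K)_{\Q}) \cap \tau_B(\Tr(B)(k)_{\Q})$ is immediate: commutativity of the square gives $f\circ\tau_A = \tau_B\circ f_{\Tr}$, so the left-hand side lies simultaneously inside both sets on the right.

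For the reverse inclusion, I would take $x \in f(A(K)_{\Q}) \cap \tau_B(\Tr(B)(k)_{\Q})$ and write $x = f(a) = \tau_B(b)$ with $a \in A(K)_{\Q}$ and $b \in \Tr(B)(k)_{\Q}$. The goal is to produce $a' \in \Tr(A)(k)_{\Q}$ with $f_{\Tr}(a') = b$, for then $(f\circ\tau_A)(a') = \tau_B(b) = x$. The strategy is to factor $f$ through its image: let $C := f(A)$ be the image of $f$ in $B$ (an abelian subvariety), $\iota: C \hookrightarrow B$ the inclusion, and $f_1: A \twoheadrightarrow C$ the induced surjection, so that $f = \iota \circ f_1$ and correspondingly $f_{\Tr} = \iota_{\Tr}\circ(f_1)_{\Tr}$.

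The key technical input is that $\Tr_{K/k}$ is exact on the isogeny category of abelian varieties over $K$; this follows from Poincar\'e complete reducibility together with the fact that $\Tr$ commutes with products (see \cite[Section 6]{conradtrace}). Applied to $0 \to C \to B \to B/C \to 0$, this gives an exact sequence $0 \to \Tr(C) \to \Tr(B) \to \Tr(B/C) \to 0$ up to isogeny; applied to the surjection $f_1$, it shows $(f_1)_{\Tr}: \Tr(A) \to \Tr(C)$ is surjective on $\Q$-points. Since $x = \iota(f_1(a))$ lies in $C(K)$, its projection to $(B/C)(K)$ vanishes, and functoriality of $\tau$ forces $\tau_{B/C}(\overline b) = 0$, where $\overline b$ denotes the image of $b$ in $\Tr(B/C)(k)_{\Q}$. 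Exactness then produces $b' \in \Tr(C)(k)_{\Q}$ with $\iota_{\Tr}(b') = b$, and surjectivity of $(f_1)_{\Tr}$ lifts $b'$ to the required $a'$. The main obstacle in this plan is the characteristic $p$ phenomenon that $\tau_{B/C}$ may have infinitesimal kernel and so fail to be injective; however the kernel is always finite, hence torsion, so after tensoring with $\Q$ one still concludes $\overline b = 0$ uniformly in all characteristics.
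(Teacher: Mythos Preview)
Your proof is correct and follows essentially the same approach as the paper: both factor $f$ through its image and then use Poincar\'e reducibility to show that the induced map on traces is surjective after tensoring with $\Q$. The only cosmetic difference is packaging---you phrase the key input as exactness of $\Tr_{K/k}$ on the isogeny category and work with the quotient $B/C$, whereas the paper writes out explicit complementary factors $B''$ and $A''$---but the content is the same.
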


 \begin{proof}
 To shorten notation, we will drop writing the map $\tau_A$ and consider $\Tr(A)(k)$ directly as a subgroup of $A(K)$ (and similarly for $B$).
 First reduce to the case where $f$ is surjective: let $B'$ be the image of $f$, an abelian subvariety of $B$. By Poincar\'e reducibility, $B$ is isogenous to $B'\times B''$, for some abelian variety $B''$. Then $\Tr(B)$ is isogenous to $\Tr(B')\times \Tr(B'')$, and the intersection of $\Tr(B')(k)\times \Tr(B'')(k)$ with $B'(K)$ is just $\Tr(B')(k)$. 
 
 Now assume $f$ is surjective. We can find abelian subvarieties $A'\subset A$ and $B'\subset B$, and abelian varieties $A'',B''$ such that $A$ is isogenous to $A'\times A''$, $\Tr(A')=\Tr(A)$, and $\Tr(A'')=0$, and similarly for $B$. Then $f$ induces a surjection $A'\twoheadrightarrow B'$, but $A'$ is isogenous to $\Tr(A')_K$, and $B'$ is isogenous to $\Tr(B')_K$, so we get a surjection $\Tr(A)(k)\twoheadrightarrow\Tr(B)(k)$, proving the lemma.
\end{proof}

Hence we may lift $\overline M_2|_Y$ to an element $\overline M_2'\in \imP{X}_{\Q}$,
 and we must have 
\[\overline M_2-\overline M_2'\in\ker\left(\widehat{\Pic}(X)\longrightarrow\widehat{\Pic}(Y)\right).\]
Since $\Pic^0(X)\to\Pic^0(Y)$ has finite kernel~\cite[Remark 9.5.8]{Kleiman}, replacing $M$ with a positive integer multiple, we may assume $M_2-M_2'=\mathcal O_X$ and thus $\overline M_2-\overline M_2'$ is vertical. Additionally, by the Cauchy-Schwarz inequality, Corollary \ref{cauchyschwarz},
\[\left(\overline M_2-\overline M_2'\right)^2\cdot \overline L_1\cdots \overline L_{n-1}=\left(\overline M-\pi^*\overline M_1-\overline M_2'\right)^2\cdot \overline L_1\cdots \overline L_{n-1}=0,\]
so that by the local Hodge Index Theorem the metric must be constant at each place and $\overline M_2-\overline M_2'\in\pi^*\widehat{\Pic}(K)_{\Q}$. Note that the local Hodge Index Theorem requires the hypothesis that $\overline M$ be $\overline L_i$-bounded. This means that 
\[\overline M=\left(\pi^*\overline M_1+\overline M_2-\overline M_2'\right)+\overline M_2'\in\pi^*\widehat{\Pic}(K)_{\Q}+\imP{X}_{\Q}.\]

This proves that when $\overline M$ is $\overline L_i$ bounded and $\overline L_i$ is arithmetically positive for all $i$, then 
\[\overline{M}^2\cdot\overline{L}_{1}\cdots\overline{L}_{n-1}=0\]
if and only if $\overline M\in\pi^*\widehat{\Pic}(K)_{\Q}+\imP{X}_{\Q}$, completing the proof of Theorem~\ref{hodgeindex}.

Since we can cut out any point in $X(\overline K)$ by sections of line bundles, (1) and (2) of Theorem~\ref{numericallytrivial} are equal. By the Hodge Index Theorem, (1) is a subgroup of $\pi^*\widehat\Pic(K)_\Q+\imP{X}_\Q$. As noted earlier, all of $\imP{X}$ is numerically trivial. It is clear that the numerically trivial subgroup of $\pi^*\widehat\Pic(K)$ is exactly $\pi^*\widehat\Pic^0(K)$ for a curve, and the above induction argument establishes this in higher dimension as well, proving Theorem~\ref{numericallytrivial}.

To prove the equality of Theorem~\ref{rlinear}, again split $M\in\Pic^\tau(X)_\R$ into an $\R$-linear combination of numerically trivial line bundles. Using the inequality, the equality can be proven for each of these individually. $L_1$ is ample, so Lemma~\ref{bertinisection} applies, and then by the induction hypothesis $M|_Y\in\imP{Y}_\R$. Then by Lemma~\ref{traceint} we conclude $M\in\imP{X}_\R$.

Finally, we note that the above arguments also prove the equality part of Theorem \ref{rlinear}: Given $M\in\Pic^{\tau}(X)_{\R}$ and $L_i$ nef, $M$ has a $\R$-linear sum of flat metrics $\overline M$ as proven above, and each $L_i$ can be extended to a nef adelic metrized bundle $\overline L_i$. Lemma \ref{bertinisection} works just the same in this $\R$-linear setting, and then by induction, we can assume $\overline M|_Y\in \imP{Y}_{\R}$. Lemma \ref{traceint} is also the same in the $\R$-linear instead of $\Q$-linear setting, so that $\overline M\in \imP{Y}_{\R}$ as desired.

%%%%%%%%%%%%%%%%%%%%%%%%%%
%%%%%%%%%%%%%%%%%%%%%%%%%%
%%%%%%%%%%%%%%%%%%%%%%%%%%
%%%%%%%%%%%%%%%%%%%%%%%%%%

\section{Algebraic Dynamical Systems}\label{dynamics}

As before, $K$ is the function field of a smooth projective curve $B$ over $k$, and let $X$ be a projective variety over $K$. Suppose $(X,f,L)$ and $(X,g,M)$ are two polarized dynamical systems on $X$, so that $f$ and $g$ are endomorphisms of $X$, and $L$ and $M$ are ample line bundles such that $f^*L\cong L^q$ and $g^*M\cong M^r$ for some $q,r>1$. 

\begin{remark}
If $X$ is not normal, we may replace $X$ by its normalization $\psi:X'\to X$, replace $f$ by the normalization $f':X'\to X'$ of $f\circ \psi$, and replace $L$ by $L'=\psi^*L$ to get a new polarized algebraic dynamical system $(X',f',L')$ with $\Prep(f')=\psi^{-1}\Prep(f)$, and similarly for $(X,g,M)$. By first replacing $K$ with an extension if necessary, we may further assume that the normalization is geometrically normal. Hence from here on out we assume without loss of generality that $X$ is geometrically normal.
\end{remark}

Our main goal in this section is to prove a comparison theorem for the points with dynamical height 0 under $f$ and $g$, with an important corollary comparing the preperiodic points of $f$ and $g$ when $k$ is a finite field. We begin with general properties of polarized algebraic dynamical systems, then define the particular arithmetic dynamical heights involved before stating the theorem.

\subsection{An $f^*$-splitting of the N\'eron-Severi sequence}
We first show that the projection from $\Pic(X)$ onto the N\'eron-Severi group has a unique $f^*$ equivariant section.

The pullback $f^*$ preserves the exact sequence
\[0\longrightarrow\Pic^0(X)\longrightarrow\Pic(X)\longrightarrow\NS(X)\longrightarrow0,\]
defining the N\'eron-Severi group $\NS(X)$, and the N\'eron-Severi Theorem~\cite[Exp. XII, Thm 5.1, p. 650]{SGA6} tells us that $\NS(X)$ is a finitely generated $\Z$-module. For arbitrary $k$, the $\Z$-module $\Pic^0(X)$ need not be finitely generated, but by the Lang--N\'eron Theorem~\cite{langneron},
\[\Pic^0(X)\bigg/\Tr_{K/k}\Pic^0(X)\cong\Pic^0(X)\bigg/\Pic^0\left(\Im_{K/k}(\Alb(X))\right)\]
is a finitely generated $\Z$-module. 
To shorten our notation, define
\[\trPo{X}:=\Pic^0(X)\bigg/\Tr_{K/k}\Pic^0(X),\]
\[\trP{X}:=\Pic(X)\bigg/\Tr_{K/k}\Pic^0(X),\]
so that we have an exact sequence of finite-dimensional $\C$-vector spaces

\[0\longrightarrow\trPo{X}
_{\C}\longrightarrow\trP{X}
_{\C}\longrightarrow\NS(X)_{\C}\longrightarrow0,\]
which is also an exact sequence of $f^*$-modules.

\begin{lemma}\label{eigenvalues}
The operator $f^*$ is semisimple on $\trPo{X}_{\C}$ 
with eigenvalues of absolute value $q^{1/2}$, and is semisimple on $\NS(X)$ with eigenvalues of absolute value $q$.
\end{lemma}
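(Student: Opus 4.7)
My plan is to handle both claims by the same template: equip the relevant $\R$-vector space with a symmetric bilinear form that is definite (after passing to an appropriate quotient), verify that $f^{*}$ rescales this form by a fixed positive constant $c$, and conclude that $f^{*}/\sqrt{c}$ is an isometry of a definite form on a finite-dimensional space, hence diagonalizable over $\C$ with eigenvalues on the unit circle. This gives semisimplicity of $f^{*}$ with eigenvalues of absolute value $\sqrt{c}$.

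For the action on $\NS(X)_{\C}$, I would take the intersection pairing $B(\alpha,\beta) := \alpha \cdot \beta \cdot L^{n-2}$ on $\NS(X)_{\R}$. The classical Hodge Index Theorem gives $B$ signature $(1,\rho-1)$ with $L$ spanning the unique positive direction. Using $f^{*}L = qL$ and $\deg(f) = q^{n}$ (the latter from $(f^{*}L)^{n} = q^{n}L^{n}$), the projection formula yields $B(f^{*}\alpha, f^{*}\beta) = q^{2}\,B(\alpha,\beta)$ and $f^{*}\alpha \cdot L^{n-1} = q\,(\alpha \cdot L^{n-1})$. The latter identity makes $L^{\perp} := \{\alpha : \alpha \cdot L^{n-1} = 0\}$ an $f^{*}$-stable subspace on which $B$ is negative definite; $f^{*}/q$ acts there as an isometry, hence is diagonalizable with eigenvalues on the unit circle. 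Combined with the eigenvector $L$ of eigenvalue $q$, this handles the first claim.

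For the action on $\trPo{X}_{\C}$, I would consider the pairing $B(M,N) := -\,\overline{M}\cdot\overline{N}\cdot\overline{L}^{\,n-1}$ on $\Pic^{\tau}(X)_{\R}$, where $\overline{M},\overline{N}$ carry flat metrics (Lemma~\ref{flatlemma}) and $\overline{L}$ is any integrable adelic metric extending $L$. The value of $B$ is independent of the chosen metric on $L$ by Lemma~\ref{flatzeromeasure}. Theorem~\ref{rlinear} says $B$ is positive semidefinite with kernel $\Tr_{K/k}(\Pic^{0}(X))_{\R}$, so it descends to a positive definite form on $\trPo{X}_{\R}$; functoriality of the $K/k$-trace guarantees that $\Tr_{K/k}\Pic^{0}(X)$ is $f^{*}$-stable, so $f^{*}$ descends to the quotient. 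Choosing $\overline{L} = \overline{L}_{f}$ to be an admissible metric with $f^{*}\overline{L}_{f} = q\overline{L}_{f}$ (built by Tate's limit as in the proof of Lemma~\ref{flatlemma}), the projection formula gives $B(f^{*}M, f^{*}N) = q^{-(n-1)}\,\deg(f)\,B(M,N) = q\,B(M,N)$, so $f^{*}/\sqrt{q}$ is an isometry of a positive definite form.

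The main input is Theorem~\ref{rlinear} for the second claim; the rest is projection-formula bookkeeping and the standard linear-algebra fact that an orthogonal transformation of a positive definite form is semisimple. The only subtlety is the forward reference to the admissible metric $\overline{L}_{f}$, but since $B$ is independent of the chosen metric on $L$ this can be sidestepped: working with any integrable metric $\overline{L}$, Corollary~\ref{flatzerocor} shows that the vertical error $f^{*}\overline{L}-q\overline{L}$ contributes nothing to the relevant intersection numbers, which recovers the same scaling identity.
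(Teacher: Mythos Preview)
Your proposal is correct and follows essentially the same approach as the paper: for $\NS(X)$ you use the classical Hodge Index pairing $\alpha\cdot\beta\cdot L^{n-2}$ on $L^{\perp}$ and the projection formula to show $q^{-1}f^{*}$ is orthogonal, and for $\trPo{X}$ you use the flat-metric pairing $\overline M\cdot\overline N\cdot\overline L^{n-1}$, its metric-independence in $\overline L$ (so that $f^{*}\overline L$ can be replaced by $q\overline L$), and the arithmetic Hodge Index Theorem to get definiteness on the quotient by the trace. The paper does exactly this, invoking Theorem~\ref{hodgeindex} rather than Theorem~\ref{rlinear} and never mentioning the admissible metric at all---your observation that the forward reference to $\overline L_f$ is unnecessary is precisely how the paper proceeds.
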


\begin{proof}
As usual let $n=\dim X$. By the classical Hodge Index Theorem~\cite[Exp. XIII, Cor 7.4]{SGA6}, we can decompose $\NS(X)_{\R}$ as 
\[\NS(X)_{\R}:=\R L\oplus P(X),\quad P(X):=\left\{\xi\in\NS(X)_{\R}:\xi\cdot L^{n-1}=0\right\},\]
and define a negative definite pairing on $P(X)$ by
\[\langle\xi_1,\xi_2\rangle:=\xi_1\cdot\xi_2\cdot L^{n-2}.\]
The projection formula for intersection numbers applied to $L^n$ gives us $\deg f=q^n$, and then applied to this pairing, we have
\[\langle f^*\xi_1,f^*\xi_2\rangle=q^2\langle\xi_1,\xi_2\rangle.\]
Hence $\frac1qf^*$ is orthogonal with respect to this pairing, and $\frac1qf^*$ is diagonalizable on $\NS(X)_{\C}$ with eigenvalues all of absolute value 1.

On $\Pic^0(X)_{\R}$ we can define a pairing as follows: for $\xi_1,\xi_2\in\Pic^0(X)_{\R}$, let $\overline{\xi}_1$, $\overline{\xi}_2$ be flat metrized extensions, and let $\overline L$ be any integrable adelic line bundle extending $L$. Then define
\[\langle\xi_1,\xi_2\rangle:=\overline{\xi}_1\cdot\overline{\xi}_2\cdot\overline L^{n-1}.\]
It follows from Corollary~\ref{flatzerocor} that this pairing does not depend on the choice of metrics, and Theorem \ref{hodgeindex} establishes that it is negative definite. Since $\Tr_{K/k}\Pic^0(X)$ is numerically trivial, this pairing descends to $\trPo{X}_{\R}$.

Again applying the projection formula,
\[(f^*\overline{\xi}_1)\cdot (f^*\overline{\xi}_2)\cdot(f^*\overline L)^{n-1}=q^n(\overline{\xi}_1\cdot\overline{\xi}_2\cdot\overline L^{n-1}),\]
since each $f^*\overline{\xi}_i$ is still flat. We may also replace $f^*\overline L$ by $\overline L^q$ because the pairing is independent of the choice of metric on $L$, and have
\[\langle f^*\xi_1,f^*\xi_2\rangle=q\langle\xi_1,\xi_2\rangle.\]
Hence, $q^{-\frac12}f^*$ is orthogonal on $\trPo{X}_{\R}$ with respect to the negative of this pairing, making it diagonalizable with eigenvalues of absolute value 1 as a transformation on $\trPo{X}_{\C}$.
\end{proof}

By the theorem, 
\[0\longrightarrow\trPo{X}_{\C}\longrightarrow\trP{X}_{\C}\longrightarrow\NS(X)_{\C}\longrightarrow0\]
has a unique splitting as $f^*$-modules by a section
\[\ell_f:\NS(X)_{\C}\longrightarrow\trP{X}_{\C}.\]
Let $P,Q\in\Q[T]$ be the minimal polynomials of $f^*$ on $\trPo{X}_{\Q}$ and $\NS(X)_{\Q}$ respectively. 
Because the eigenvalues of $f^*$ are different on $\trPo{X}_{\Q}$ and $\NS(X)_{\Q}$, we see that $P$ and $Q$ are coprime, and $PQ$ is the minimal polynomial of $f^*$ on $\trP{X}_{\Q}$. Define
\[\Pic_{tr,f}(X)_{\Q}:=\ker Q(f^*)|_{\trP{X}_{\Q}}\]
and then this splitting can be given over $\Q$ as
\[\ell_f:\NS(X)_{\Q}\isom\Pic_{tr,f}(X)_{\Q}\hooklongrightarrow\trP{X}_{\Q}.\]

\subsection{Admissible metrics}

\begin{theorem}\label{admissible}
The projection $\widehat{\Pic}(X)_{\Q}\to\Pic(X)_{\Q}$ has a unique section $M\mapsto \overline M_f$ as $f^*$-modules, satisfying:
\begin{enumerate}
\item If $M\in\Pic^0(X)_{\Q}$ then $\overline M_f$ is flat.
\item If $M\in\Pic_{tr,f}(X)_{\Q}$ is ample then $\overline M_f$ is nef.
\end{enumerate}
Adelic metrized line bundles of the form $\overline M_f$ are called $f$-admissible.
\end{theorem}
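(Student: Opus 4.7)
The plan is to build the section $M\mapsto\overline M_f$ piecewise on an $f^*$-stable decomposition of $\Pic(X)_\Q$, using the canonical $\widehat\tau_{K/k}$-construction on the trace piece and Tate-style limiting arguments on the other two, then invoke the eigenvalue gaps of Lemma~\ref{eigenvalues} to establish both convergence and uniqueness.

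First I would set up the decomposition. Combining the trace sequence
\[0\longrightarrow\Tr_{K/k}\Pic^0(X)_\Q\longrightarrow\Pic(X)_\Q\longrightarrow\trP{X}_\Q\longrightarrow0\]
with the splitting $\trP{X}_\Q\cong\trPo{X}_\Q\oplus\Pic_{tr,f}(X)_\Q$ from the previous subsection, I would produce a (non-canonical) $f^*$-stable decomposition
\[\Pic(X)_\Q=\Tr_{K/k}\Pic^0(X)_\Q\oplus W_0\oplus W_{\rm tr,f},\]
where $W_0$ and $W_{\rm tr,f}$ are $f^*$-stable lifts of $\trPo{X}_\Q$ and $\Pic_{tr,f}(X)_\Q$. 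By Lemma~\ref{eigenvalues}, $f^*$ acts on $W_0$ with eigenvalues of absolute value $q^{1/2}$ and on $W_{\rm tr,f}$ with eigenvalues of absolute value $q$; that these are distinct from $1$ and from the absolute values of eigenvalues of $f_\Im^*$ on $\Tr_{K/k}\Pic^0(X)_\Q$ (which are controlled by polarization arguments on $\Im_{K/k}\Alb_X$) guarantees the decomposition can be chosen $f^*$-equivariantly.

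On the three pieces I would define the section as follows. For $M\in\Tr_{K/k}\Pic^0(X)_\Q$, set $\overline M_f:=\htau_{K/k}(M)$; this is a flat model metric coming from a model of $\Im_{K/k}\Alb_X$ over $B$ defined over $k$, and $f^*$-equivariance follows from the functoriality of the Albanese/image construction: $f$ descends to a $k$-morphism $f_\Im$ on $\Im_{K/k}\Alb_X$, and pulling back the $k$-model metric commutes with $f_\Im^*$. For $M\in W_{\rm tr,f}$, run Tate's iteration: start with any adelic lift $\overline M_0$ (nef and integrable when $M$ is ample), and define $\overline M_{n+1}$ from $\overline M_n$ via the minimal polynomial $Q(T)$ of $f^*$ on $\NS(X)_\Q$, whose roots all have absolute value $q$. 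This is a contraction of factor $1/q<1$ on the space of metrics in the adelic sup norm, so it converges to a unique fixed point $\overline M_f$, which remains nef if the starting metric was nef. For $M\in W_0$, begin with any flat metric from Lemma~\ref{flatlemma}, unique up to $\pi^*\widehat\Pic(K)_\Q$; fix the ambiguity by requiring $f^*\overline M_f=\overline{(f^*M)}_f$, which is uniquely solvable because $f^*$ acts as the identity on $\pi^*\widehat\Pic(K)_\Q$ (since $\pi\circ f=\pi$) but with eigenvalues of absolute value $q^{1/2}>1$ on $W_0$. Flatness is preserved under the uniform adelic limit of flat metrics, so $\overline M_f$ remains flat. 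Combining the three constructions $\Q$-linearly gives the section.

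For uniqueness, given two $f^*$-equivariant sections $\sigma,\sigma'$ satisfying (1) and (2), the difference $\delta:=\sigma-\sigma'$ lands in the group $V(X)_\Q$ of vertical adelic metrized line bundles and is $f^*$-equivariant. On $W_{\rm tr,f}$ and $W_0$, the identity $(f^*)^n\delta(M)=\lambda^n\delta(M)$ for an eigenvalue $\lambda$ of absolute value $>1$ forces $\delta(M)=0$, because iterating $f^*$ on a fixed integrable vertical metric does not grow its sup norm at any place. On the trace piece, both sections agree with $\htau_{K/k}(M)$ once the constant ambiguity is fixed by $f^*$-equivariance against a single non-$f^*$-fixed element of $\Tr$. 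Properties (1) and (2) follow by construction: Pieces 1 and 2 give flat metrics on $\Pic^0(X)_\Q$, and Piece 3 gives nef metrics on ample elements of $\Pic_{tr,f}(X)_\Q$.

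The main obstacle is twofold. First, verifying that the Tate iterations on $W_0$ and $W_{\rm tr,f}$ converge \emph{globally} as adelic metrics rather than just place by place, which requires choosing initial model metrics so that only finitely many places are bad and the contraction factor holds uniformly. Second, handling the interaction of the decomposition with the Trace piece: the $f^*$-action on $\Tr_{K/k}\Pic^0(X)_\Q$ is not controlled by Lemma~\ref{eigenvalues}, so one must carefully argue that its eigenvalues do not overlap with those on $W_0$ or $W_{\rm tr,f}$ in a way that would prevent the $f^*$-stable splitting or break the uniqueness argument on the trace component.
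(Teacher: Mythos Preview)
Your approach differs substantially from the paper's, and while the underlying idea---eigenvalue separation between $f^*$ acting on $\Pic(X)_\Q$ and $f^*$ acting on the space of metrics---is the same, the paper's execution is cleaner and directly resolves both obstacles you flag at the end.

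The paper does not decompose $\Pic(X)_\Q$ at all. Instead it enlarges to $\widehat\Pic(X)'$, the group of line bundles with arbitrary continuous (not necessarily integrable) adelic metrics, and works with the exact sequence
\[0\longrightarrow D(X)\longrightarrow\widehat\Pic(X)'_\Q\longrightarrow\Pic(X)_\Q\longrightarrow0,\]
where $D(X)=\widehat\Pic(K)_\Q\oplus\bigoplus_v C(X_v^{\an})$. The key observation is that $f^*$ acts on each $C(X_v^{\an})$ with operator norm $1$ in the sup norm, so if $R$ is the minimal polynomial of $f^*$ on $\Pic(X)_\Q$, each factor $(1-f^*/\lambda_i)$ of $R(f^*)$ is invertible on $D(X)$ by the absolutely convergent geometric series $\sum_k(f^*/\lambda_i)^k$, since every $|\lambda_i|\ge q^{1/2}>1$. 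Hence $R(f^*)$ is invertible on $D(X)$, and $E(X):=\ker R(f^*)\subset\widehat\Pic(X)'_\Q$ is the unique $f^*$-stable complement to $D(X)$. The section is given in one stroke by $\overline M_f=\overline M - R(f^*)|_{D(X)}^{-1}R(f^*)\overline M$.

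This immediately handles your first obstacle: convergence is global and uniform because it is a single absolutely convergent series in the adelic sup norm, with the correction vanishing at all but finitely many places once the initial $\overline M$ is chosen to be a model metric. It also dissolves most of your second obstacle: no $f^*$-stable splitting of $\Pic(X)_\Q$ is required, so you never have to separate the trace piece from a lift $W_0$. (One still needs that $f^*$ on $\Tr_{K/k}\Pic^0(X)_\Q$ is annihilated by a polynomial with roots of absolute value $>1$, so that $R(f^*)$ kills all of $\Pic(X)_\Q$; this is the polarization argument on $\Im_{K/k}\Alb_X$ you allude to, and is the only place the trace enters.)

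Your proposal also has two concrete gaps beyond the obstacles you name. First, the ``Tate iteration via $Q(T)$'' on $W_{\mathrm{tr},f}$ is not actually specified: for a general minimal polynomial there is no evident iterative map whose fixed points are $\ker Q(f^*)$ and which contracts by a factor $1/q$ on metrics; the paper's closed-form inverse of $R(f^*)$ on $D(X)$ is the correct replacement. Second, on $W_0$ your normalization ``require $f^*\overline M_f=\overline{(f^*M)}_f$'' is circular, since the right side is defined by the same unresolved procedure. The paper instead proves flatness \emph{after} constructing the section: pulling back to $\Alb_X$, the $f^*$-equivariant section automatically commutes with $[2]^*$ (because $[2]$ commutes with $f'=(f^*)^\vee$), so $[2]^*\overline M_{f'}=2\overline M_{f'}$, which forces the metric to be flat. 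This is simpler than building flatness in piecewise and then worrying about compatibility across your three summands.
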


\begin{proof}
Define $\widehat{\Pic}(X)'$ to  be the group of adelic line bundles on $X$ with continuous (but not necessarily integrable) metrics. This contains $\widehat{\Pic}(X)$. We will show that the projection $\widehat{\Pic}(X)'_{\Q}\to\Pic(X)_{\Q}$ has a unique section, and then that properties 1 and 2 of the theorem hold for this section. Since $\Pic^0(X)_{\Q}$ and the ample elements of $\Pic_{tr,f}(X)_{\Q}$ generate $\Pic(X)_{\Q},$ the section does in fact produce integrable metrics, proving the theorem.

The kernel of the projection $\widehat{\Pic}(X)'_{\Q}\to\Pic(X)_{\Q}$ is 
\[ D(X)=\widehat{\Pic}(K)_{\Q}\bigoplus_vC(X_v^{\an}),\]
where $C(X_v^{\an})$ is the ring of continuous $\R$-valued functions on $X_v^{\an}$, via the association $||\cdot||_v\to-\log ||1||_v$. Recall that $R=PQ$ was defined to be the minimal polynomial of $f^*$ on $\Pic(X)_{\Q}$ and now consider the action of $R(f^*)$ on $D(X)$.

\begin{lemma}
$R(f^*)$ is invertible on $D(X)$.
\end{lemma}
\begin{proof}
$f^*$ acts as the identity on $\widehat{\Pic}(X)$, hence $R(f^*)$ acts as $R(1)$, and this is not zero because the roots of $R$ all have absolute value $q$ or $q^{\frac12}$.
So it suffices to show that $R(f^*)$ is invertible on $C(X)_{\C}:=\left(\bigoplus_vC(X_v^{\an})\right)\otimes_{\R}\C$. Factor $R$ over $\C$ as 
\[ R(T)=a\prod_i\left(1-\frac{T}{\lambda_i}\right),\]
where $a\ne0$, and by lemma \ref{eigenvalues}, $|\lambda_i|$ is either $q^{\frac12}$ or $q$. $R(f^*)$ is invertible provided each term $1-f^*/\lambda_i$ is, and each term has inverse
\[ \left(1-\frac{f^*}{\lambda_i}\right)^{-1}=\sum_{k=0}^{\infty}\left(\frac{f^*}{\lambda_i}\right)^k,\]
provided this series converges absolutely with respect to the operator norm, which is defined with respect to the supremum norm $||\cdot||_{sup}$ on $C(X^{\an}_v)_{\C}$ for every place $v$. $f^*$ does not change the supremum norm, so the operator norm of $f^*$ is $1$, and 
\[ \left\lVert\left(\frac{f^*}{\lambda_i}\right)^k\right\lVert=\frac1{|\lambda_i|^k}\le q^{-\frac k2},\]
so the series converges absolutely.
\end{proof}

\begin{cor}
The exact sequence
\[ 0\longrightarrow D(X)\longrightarrow\widehat{\Pic}(X)_{\Q}'\longrightarrow\Pic(X)_{\Q}\longrightarrow0\]
has a unique $f^*$-equivariant splitting.
\end{cor}
\begin{proof}
Define 
\[ E(X):=\ker\left(R(f^*):\widehat{\Pic}(X)'_{\Q}\longrightarrow\widehat{\Pic}(X)'_{\Q}\right).\]
Since $R(f^*)$ kills all of $\Pic(X)_{\Q}$, this gives an $f^*$-invariant decomposition 
\[ \widehat{\Pic}(X)'_{\Q}=D(X)\bigoplus E(X)\]
such that the projection onto $\Pic(X)$ gives an isomorphism $E(X)\isom\Pic(X)_{\Q}$, whose inverse is the desired splitting.

We can write this down even more explicitly. For $M\in\Pic(X)_{\Q}$, let $\overline M$ be any choice of metric in $\widehat{\Pic}(X)'_{\Q}$. Then define
\[ \overline M_f:=\overline M-R(f^*)|_{D(X)}^{-1}R(f^*)\overline M.\]
\end{proof}

It now remains to show that this splitting satisfies (1) and (2). To start, suppose $M$ is in $\Pic^0(X)_{\Q}$. Let $x_0\in\Prep(f)$, then after replacing $f$ by an iterate and $K$ by a finite extension if necessary, we may assume that $x_0$ is a fixed point. Let $i:X\to \Alb(X)$ be the Albanese map taking $x_0\mapsto0$, then $f^*$ and $i^*$ induce the following commutative diagram, where $f':=(f^*)^{\vee}$:

\[
\begin{tikzcd}
\Pic^0(\Alb(X)) \arrow[shift left=.5ex]{r}{i^*}\arrow{d}[left]{(f')^*} & \Pic^0(X)\arrow[shift left=.5ex]{l}{\sim}\arrow{d}{f^*}\\
\Pic^0(\Alb(X)) \arrow[shift left=.5ex]{r}{i^*}\arrow{d}[left]{M\mapsto\overline M_{f'}} & \Pic^0(X)\arrow[shift left=.5ex]{l}{\sim}\arrow{d}{M\mapsto\overline M_f}\\
\widehat{\Pic}(\Alb(X))' \arrow{r}{i^*} & \widehat{\Pic}(X)'\\
\end{tikzcd}
\]

Because this commutes, it suffices to show (1) for abelian varieties, as $i^*$ takes $M_{f'}$ to $\overline M_f$, and the pullback of a flat metric is also flat. Now $[2]^*M=2M$, and since $[2]$ commutes with $f'$, 
\[ [2]^*\overline M_{f'}=2\overline M_{f'},\]
so that as in the proof of Lemma \ref{flatlemma}, we have that $\overline M_{f'}$, and hence also $\overline M_f$ is flat.

Finally, we show that (2) also holds. This is proven when $K$ is a number field in \cite[Theorem 4.9]{yz}, however the proof works identically in our geometric setting. 
\end{proof}

The above section also descends to a section $\trP{X}\to\widehat{\Pic}(X)\big/\imP{X},$ as by construction every element of $\imP{X}$ has a flat metric. Thus, we have an $f^*$-equivariant linear map
\[ \widehat{\ell}_f:\NS(X)_{\Q}\longrightarrow\left(\widehat{\Pic}(X)/\imP{X}\right)_{\Q}\]
given  by the composition of the section developed in Theorem \ref{admissible} and the map just preceding it.

\subsection{Rigidity of height zero points and preperiodic points}

Heights given by $f$-admissible metrized line bundles have particularly nice properties and correspond to the dynamical canonical heights defined by Call-Silverman \cite{CS}.
\begin{proposition}\label{canonicalheights}
Let $ M\in\Pic(X)_{\Q}$. Then:
\begin{enumerate}
\item
If $f^*M= M^\lambda$ for some $\lambda\in\Q$, then $f^*\overline M_f= \overline M_f^\lambda$ in $\widehat{\Pic}(X)_{\Q}$, and 
\[ h_{\overline M_f}(f(\cdot))=\lambda h_{\overline M_f}(\cdot).\]
\item
For $x\in\Prep(f)$, $\overline M_f|_x$ is trivial on $\widehat{\Pic}(x)_{\Q}$, and in particular $h_{\overline M_f}$ is zero on $\Prep(f)$.
\end{enumerate}
Further, if $M$ is ample and $f^*M=\lambda M$ for some $\lambda>1$ (in particular, if $M=L$), then:
\begin{enumerate}
\setcounter{enumi}{2}

\item
$h_{\overline M_f}(x)\ge0$ for all $x\in X(K)$, and
\item
if the above holds and $k$ is finite, $h_{\overline M_f}(x)=0$ if and only if $x\in\Prep(f)$.

\end{enumerate}
\end{proposition}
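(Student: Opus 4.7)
The plan is to address the four parts in order. Part~(1) is essentially formal: the hypothesis $f^*M = M^\lambda$ reads additively as $f^*M = \lambda M$ in $\Pic(X)_\Q$, and applying the $\Q$-linear, $f^*$-equivariant section $M \mapsto \overline M_f$ from Theorem~\ref{admissible} yields $f^*\overline M_f = \overline{f^*M}_f = \lambda\,\overline M_f$ in $\widehat\Pic(X)_\Q$. Functoriality of heights under pullback then gives $h_{\overline M_f}(f(x)) = h_{f^*\overline M_f}(x) = \lambda\,h_{\overline M_f}(x)$.

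For~(2), let $R = PQ$ denote the minimal polynomial of $f^*$ on $\Pic(X)_\Q$. The construction of $\overline M_f$ in the proof of Theorem~\ref{admissible} arranges precisely that $R(f^*)\overline M_f = 0$ in $\widehat\Pic(X)_\Q$, since the correction term $R(f^*)|_{D(X)}^{-1}R(f^*)\overline M$ cancels $R(f^*)\overline M \in D(X)$. Taking heights produces the linear recurrence $\sum_j a_j\,h_{\overline M_f}(f^j(y)) = 0$ for every $y \in X(\overline K)$, where $R(T) = \sum_j a_j T^j$. For $x \in \Prep(f)$ the sequence $\{h_{\overline M_f}(f^j(x))\}_{j \geq 0}$ is eventually periodic; Lemma~\ref{eigenvalues} forces every root of $R$ to have absolute value $q$ or $q^{1/2}$, so none is a root of unity, and hence $R$ acts invertibly on any finite-dimensional space of eventually periodic sequences. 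The recurrence therefore forces $h_{\overline M_f}(f^j(x)) = 0$ for all $j$. To upgrade this to triviality of $\overline M_f|_x$ in $\widehat\Pic(x)_\Q$, I will restrict $R(f^*)\overline M_f = 0$ to the finite forward orbit $O$ of $x$ and apply the same invertibility argument to $R(f|_O^*)$ acting on $\widehat\Pic(O)_\Q$: the permutation $f|_O$ has only roots-of-unity eigenvalues, so $R(f|_O^*)$ is invertible and kills only zero.

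For~(3), Theorem~\ref{admissible}(2) yields that $\overline M_f$ is nef, because an ample $M$ with $f^*M = \lambda M$ and $\lambda > 1$ has its $\NS$-class an $f^*$-eigenvector with eigenvalue $\lambda$, and so lies in $\Pic_{tr,f}(X)_\Q$. Nefness combined with the standard Call--Silverman comparison $h_{\overline M_f} = h_M + O(1)$, where $h_M$ is a Weil height of the ample bundle $M$, provides a lower bound $h_{\overline M_f} \geq -C$ on $X(\overline K)$. Applying~(1), $h_{\overline M_f}(x) = \lambda^{-n} h_{\overline M_f}(f^n(x)) \geq -C\lambda^{-n}$, and letting $n \to \infty$ gives $h_{\overline M_f}(x) \geq 0$. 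Part~(4) then follows by combining~(1) and~(2) with Northcott's theorem for function fields over a finite $k$: if $h_{\overline M_f}(x) = 0$, then by~(1) every $f^n(x) \in X(K(x))$ has height zero, so Northcott forces the forward orbit of $x$ to be finite, giving $x \in \Prep(f)$; the converse is~(2).

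The main obstacle will be the Picard-group upgrade in~(2)---transferring the vanishing from the scalar height sequence to a $\widehat\Pic(x)_\Q$-valued one---which requires a careful set-up of the $f^*$-action on the arithmetic Picard group of the preperiodic orbit, although the underlying eigenvalue obstruction is exactly the same one used for heights.
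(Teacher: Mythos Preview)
Your argument is correct but takes a different route from the paper, which simply identifies $h_{\overline M_f}$ with the Call--Silverman canonical height $\widehat h_f$ and cites \cite{CS} for all four properties. You instead extract everything from the machinery of Theorem~\ref{admissible}: the $f^*$-equivariance of the section gives~(1) immediately, and for~(2) the defining relation $R(f^*)\overline M_f=0$ combined with the eigenvalue separation of Lemma~\ref{eigenvalues} does the work. This is more self-contained and shows how much is already built into the admissible-metric construction, without appealing to an external uniqueness characterisation of $\widehat h_f$. One small repair: in the $\widehat\Pic$-upgrade of~(2), $f|_O$ need not be a permutation when $x$ is strictly preperiodic; nevertheless the eigenvalues of $f|_O^*$ on $\bigoplus_{y\in O}\widehat\Pic(y)_\Q$ are still only $0$ or roots of unity (since $(f|_O)^N=(f|_O)^{N+p}$ for some $N,p>0$), and neither is a root of $R$, so $R(f|_O^*)$ remains invertible and your conclusion stands. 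In~(3) your invocation of nefness is not actually used---the lower bound $h_{\overline M_f}\ge -C$ comes purely from $h_{\overline M_f}=h_M+O(1)$ with $M$ ample---though alternatively nefness of $\overline M_f$ yields $h_{\overline M_f}\ge 0$ directly via Proposition~\ref{successiveminima}, bypassing the $\lambda^{-n}$ limit.
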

Call and Silverman~\cite{CS} establish that our height agrees with the dynamical canonical height $\widehat h_f$, and then the above all follow from well-known properties of dynamical heights proven in the same paper.

We can now state and prove our main theorem of this section.
\begin{theorem}\label{dynamicsthm}
Let $(f,L)$ and $(g,M)$ be two polarized algebraic dynamical systems on $X$. Define $Z_f:=\{x\in X(\overline K)|h_{\overline L_f}(x)=0\}$ to be the set of height zero points with respect to $\overline L_f$, and $Z_g$ the set of height zero points with respect to $\overline M_g$, and let $Z$ be the Zariski closure of $Z_f\cap Z_g$ in $X$. Then
\[ Z_f\cap Z(\overline K)=Z_g\cap Z(\overline K).\]
\end{theorem}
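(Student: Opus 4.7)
The plan is to reduce to each irreducible component of $Z$ and apply the equality case of Theorem~\ref{hodgeindex}. Write $Z=\bigcup_i Y_i$ as a union of irreducible components. Since each $Y_i$ is irreducible and the union of the $Y_i$ is the Zariski closure of $Z_f\cap Z_g$, a short irreducibility argument shows that $Z_f\cap Z_g$ must be Zariski dense in each $Y_i$. It therefore suffices to prove $Z_f\cap Y(\overline K)=Z_g\cap Y(\overline K)$ for a fixed irreducible component $Y$ of dimension $d$, the case $d=0$ being trivial. After a harmless finite base change of $K$, handled in the same manner as in the inequality step of Theorem~\ref{hodgeindex}, we may further assume that $Y$ is geometrically normal and geometrically integral and carries a $K$-rational point.

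The first step is to extract many vanishing intersection numbers on $Y$ from the density hypothesis. Since $\overline L_f|_Y$ is nef by Theorem~\ref{admissible}, heights with respect to it are non-negative on $Y(\overline K)$, and the density of $Z_f$ in $Y$ forces the essential minimum $\lambda_1(Y,\overline L_f|_Y)=0$. Proposition~\ref{successiveminima} then yields $(\overline L_f|_Y)^{d+1}=0$, and identically $(\overline M_g|_Y)^{d+1}=0$. Applied to the nef sum $\overline H:=\overline L_f+\overline M_g$, whose height-zero locus on $Y$ contains the Zariski dense set $Z_f\cap Z_g\cap Y$, the same reasoning gives $(\overline H|_Y)^{d+1}=0$. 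Expanding by the binomial theorem and using that each mixed intersection of nef adelic line bundles is non-negative, every summand must individually vanish:
\[
(\overline L_f|_Y)^k\cdot(\overline M_g|_Y)^{d+1-k}=0,\qquad 0\le k\le d+1.
\]

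With these vanishings, choose positive rationals $\alpha,\beta$ satisfying $\alpha(L_f^d\cdot Y)=\beta(L_f^{d-1}\cdot M_g\cdot Y)$; both geometric degrees are positive by ampleness of $L$ and $M$. Setting $\overline N:=\alpha\overline L_f|_Y-\beta\overline M_g|_Y$, we have $N\cdot L_f^{d-1}\cdot Y=0$ by construction. Expanding $\overline N^2\cdot(\overline L_f|_Y)^{d-1}$ and applying the mixed vanishings shows this arithmetic intersection is zero, so the equality case of Theorem~\ref{hodgeindex}, applied with every $\overline L_i$ taken to be $\overline L_f|_Y$, gives
\[
\overline N\in\pi^*\widehat{\Pic}(K)_{\Q}+\imP{Y}_{\Q}.
\]
Writing $\overline N=\pi^*\overline C+\overline T$ accordingly, the height $h_{\overline N}$ is a constant $c$ plus zero, using that $\pi^*$-pullbacks from the base give constant heights and that elements of $\imP{Y}_{\Q}$ have identically vanishing height by Theorem~\ref{numericallytrivial}. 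Evaluating $h_{\overline N}(x_0)=\alpha\cdot 0-\beta\cdot 0$ at any $x_0\in Z_f\cap Z_g\cap Y$ forces $c=0$, so $\alpha\,h_{\overline L_f}(x)=\beta\,h_{\overline M_g}(x)$ identically on $Y(\overline K)$. Since $\alpha,\beta>0$, this establishes $Z_f\cap Y(\overline K)=Z_g\cap Y(\overline K)$.

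The main obstacle is verifying the hypotheses required to invoke the equality case of Theorem~\ref{hodgeindex}, namely arithmetic positivity of $\overline L_f|_Y$ and $\overline L_f|_Y$-boundedness of $\overline N$. For arithmetic positivity one thickens $\overline L_f$ by $\epsilon\pi^*\overline A$ for a fixed $\overline A\in\widehat\Pic(K)$ of positive arithmetic degree, runs the argument for each $\epsilon>0$, and lets $\epsilon\to 0$, using that all relevant intersection numbers vary continuously in $\epsilon$ so the limiting identities persist. Boundedness follows from ampleness of $L$ and $M$ on $X$: the classes $(m\pm\alpha)L\mp\beta M$ are ample for large $m$, and their admissible metrics, adjusted by an integrable correction if necessary, realize the required bounds. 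These technical checks mirror analogous steps in the Yuan--Zhang number field argument and in the proof of Theorem~\ref{hodgeindex}.
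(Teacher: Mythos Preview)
Your overall strategy is close to the paper's, but there is a genuine gap in the step where you invoke the equality case of Theorem~\ref{hodgeindex}. The admissible metric $\overline L_f|_Y$ is nef but \emph{not} arithmetically positive in general (e.g.\ torsion points on an abelian variety have canonical height zero, contradicting arithmetic positivity), so a perturbation is unavoidable. However, your $\epsilon$-perturbation does not work: replacing $\overline L_f$ by $\overline L_f+\epsilon\,\pi^*\overline A$ changes the key intersection number to
\[
\overline N^2\cdot(\overline L_f|_Y+\epsilon\,\pi^*\overline A)^{d-1}
=(d-1)\,\epsilon\,\widehat{\deg}(\overline A)\cdot\bigl(N^2\cdot L^{d-2}\cdot Y\bigr)+O(\epsilon^2),
\]
using that $\pi^*\overline A$ is vertical and that intersecting against it extracts the geometric degree. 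By the classical Hodge index on $Y$, the quantity $N^2\cdot L^{d-2}\cdot Y$ is strictly negative unless $N=\alpha L-\beta M$ is numerically trivial on $Y$, which it has no reason to be. Thus for each $\epsilon>0$ you are in the strict inequality regime of Theorem~\ref{hodgeindex} and can conclude nothing; there is no identity to pass to the limit. Separately, with your choice $\overline L_i=\overline L_f|_Y$ the $\overline L_i$-boundedness also fails, since $(m+\alpha)\overline L_f-\beta\overline M_g$ mixes two different admissible structures and need not be nef for any $m$; this particular issue can be repaired by taking $\overline L_i=\overline L_f|_Y+\overline M_g|_Y$ instead, but the arithmetic positivity obstruction above persists.

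The paper's fix is precisely to arrange that the difference be numerically trivial. Rather than compare $\overline L_f$ with $\overline M_g$ directly, one uses the $f^*$- and $g^*$-equivariant sections $\ell_f,\ell_g:\NS(X)_{\Q}\to\trP{X}_{\Q}$ to lift the \emph{same} N\'eron--Severi class $\xi=[L]$ twice, obtaining ample $L_f,L_g$ with $L_f-L_g\in\Pic^0(X)_{\Q}$. Then $\overline L_f-\overline L_g$ is flat by Theorem~\ref{admissible}(1), so by Corollary~\ref{flatzerocor} the vertical perturbation $\pi^*\overline C$ leaves $(\overline L_f-\overline L_g)^2\cdot(\overline L_f+\overline L_g+\pi^*\overline C)^{d-1}$ unchanged, and boundedness is immediate since $(\overline L_f+\overline L_g)\pm(\overline L_f-\overline L_g)=2\overline L_f,\,2\overline L_g$ are both nef. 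Lemma~\ref{multiplepolarizations} is then used to pass between $h_{\overline L_g}$ and $h_{\overline M_g}$. This construction of a numerically trivial comparison class is the missing idea in your argument.
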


When $k$ is finite, $Z_f=\Prep(f)$ and $Z_g=\Prep(g)$, so Corollary \ref{dynamicsthmfinite} stated in the introduction follows as an immediate consequence. If $k$ is not finite, it is still true that $Z_f\supseteq\Prep(f)$, but there may be height zero points with infinite forward orbit. See Section~\ref{future} for further discussion.

\begin{proof} We begin by proving a simpler lemma, justifying the notation that $Z_f$ does not depend on the polarization $L$.

\begin{lemma}\label{multiplepolarizations}
Let $f:X\to X$, and let $L$ and $M$ be two ample line bundles which polarize $f$. Then
\[ \left\{x\in X(\overline K)\big|h_{\overline L_f}(x)=0\right\}\text{ is equal to }\left\{x\in X(\overline K)\big|h_{\overline M_f}(x)=0\right\},\]
and we unambiguously call both sets $Z_f$.
\end{lemma}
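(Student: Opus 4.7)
My plan is to use the functional equation of the dynamical canonical height together with the classical equivalence of Weil heights attached to ample line bundles. Since the statement is symmetric in $L$ and $M$, it is enough to show that $h_{\overline L_f}(x) = 0$ implies $h_{\overline M_f}(x) = 0$ for every $x \in X(\overline K)$.

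First I would recall from Proposition~\ref{canonicalheights} that both $h_{\overline L_f}$ and $h_{\overline M_f}$ are non-negative on $X(\overline K)$, satisfy the functional equations $h_{\overline L_f}(f^n(x)) = q^n h_{\overline L_f}(x)$ and $h_{\overline M_f}(f^n(x)) = r^n h_{\overline M_f}(x)$ with $q, r > 1$, and agree with naive Weil heights $h_L$ and $h_M$ for $L$ and $M$ up to bounded functions. Next, since $L$ and $M$ are both ample, a standard argument (for $N$ large, $NL - M$ is ample and hence has a globally generated multiple whose associated Weil height is bounded below on $X(\overline K)$) gives the comparison inequality $h_M(y) \leq N h_L(y) + O(1)$ uniformly in $y \in X(\overline K)$.

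With these preliminaries in hand the conclusion is short. If $h_{\overline L_f}(x) = 0$, then the functional equation gives $h_{\overline L_f}(f^n(x)) = 0$ for all $n$, so $h_L(f^n(x))$ is uniformly bounded in $n$; the comparison above then gives $h_M(f^n(x)) = O(1)$, and so $h_{\overline M_f}(f^n(x)) = O(1)$ as well. But $h_{\overline M_f}(f^n(x)) = r^n h_{\overline M_f}(x)$ with $r > 1$ and $h_{\overline M_f}(x) \geq 0$, so this uniform boundedness in $n$ is only possible when $h_{\overline M_f}(x) = 0$.

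The only nontrivial ingredient is the Weil-height comparison $h_M \leq N h_L + O(1)$ for ample line bundles, and that is the step I would expect to require the most care to cite cleanly in the general function-field setting. It is worth noting that the argument avoids Northcott's property entirely, which is essential since the lemma is intended to hold over arbitrary base $k$ where no Northcott property is available.
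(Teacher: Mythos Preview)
Your argument is correct, but it differs from the paper's. The paper observes that any two polarizations of the same $f$ share the same eigenvalue (since $q^{n}=\deg f=r^{n}$), so for $c\gg 0$ the ample class $cL-M$ again satisfies $f^{*}(cL-M)=q(cL-M)$. By linearity of the section $M\mapsto\overline M_f$ from Theorem~\ref{admissible} one has $\overline{(cL-M)}_f=c\,\overline L_f-\overline M_f$, and then Proposition~\ref{canonicalheights}(3) gives the clean pointwise bound
\[
0\le h_{\overline M_f}(x)\le c\,h_{\overline L_f}(x),
\]
from which the conclusion is immediate. Your route instead passes through the $O(1)$ Weil-height comparison and extracts the result from the exponential growth $h_{\overline M_f}(f^n(x))=r^{\,n}h_{\overline M_f}(x)$ along orbits. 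The paper's version is shorter and yields an exact inequality with no error term; yours has the virtue of relying only on the classical Call--Silverman package (functional equation, boundedness against a Weil height, and the ample comparison $h_M\le Nh_L+O(1)$), and in particular never needs to invoke $q=r$ or the linearity of the admissible-metric section. The Weil-height comparison you flag is indeed standard over any product-formula field and is treated in \cite{heights}, so no special care is needed in the function-field setting.
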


\begin{proof}
Since $L$ is ample, there exists a constant $c>0$ such that $cL-M$ is also ample. Then by Proposition~\ref{canonicalheights}, the canonical heights $h_{\overline M_f}$ and $h_{\overline {cL}_f}=ch_{\overline L_f}$ are related by
\[ 0\le h_{\overline M_f}(x)\le ch_{\overline L_f}(x)\]
for all $x\in X(\overline K)$. Thus 
\[ \left\{x\in X(\overline K)|h_{\overline L_f}(x)=0\right\}\subseteq\left\{x\in X(\overline K)|h_{\overline M_f}(x)=0\right\}.\]
By symmetry, we also have containment in the other direction.
\end{proof}

We now prove the theorem.

Let $Y$ be the normalization of an irreducible component of $Z$, assume $K$ is replaced by a finite extension if necessary so that $Y$ is geometrically normal, and say $\dim Y=d$. 
Let $\xi$ be the image of $L$ in $\NS(X)$. $\xi$ has two different lifts $\ell_f(\xi)$ and $\ell_g(\xi)$ to $\trP{X}_{\Q}$; let $L_f$ and $L_g$ be representatives in $\Pic(X)_{\Q}$ of these classes in $\trP{X}_{\Q}$. Since $L$ is one such choice of representative for $\ell_f(\xi)$ and ampleness is preserved by numerical equivalency, $L_f$ and $L_g$ must both be ample. 

By Theorem \ref{admissible}, $L_f$ and $L_g$ have $f$- and respectively $g$-admissible metrics, which we call $\overline L_f$ and $\overline L_g$. Both are nef. Their sum $\overline N:=\overline L_f+\overline L_g$ is also nef, and defines a height function $h_{\overline N}$, which does not depend on the choice of representatives of cosets modulo the trace.

 By Lemma~\ref{multiplepolarizations} and the premise that $Z_f\cap Z_g\cap Z(\overline K)$ is dense, $Y$ has a dense set of points which have height zero under $h_{\overline N}$.  
By the successive minima (Proposition~\ref{successiveminima}),
\[ \lambda_1(Y,{\overline N})=h_{\overline N}(Y)=0.\]

Rewriting the height of $Y$ in terms of intersections,
\[ 0=\left(\overline L_f|_Y+\overline L_g|_Y\right)^{d+1}=\sum_{i=0}^{d+1}{d+1 \choose i}\left(\overline L_f|_Y\right)^i\cdot\left(\overline L_g|_Y\right)^{d+1-i}.\]

Since both $\overline L_f$ and $\overline L_g$ are nef, every term in the sum on the right is non-negative, hence all must be zero. Then
\[ \left(\overline L_f|_Y-\overline L_g|_Y\right)^2\cdot\left(\overline L_f|_Y+\overline L_g|_Y\right)^{d-1}=0,\] 
as well. 
Because $L_f-L_g$ is zero in the N\'eron-Severi group, and thus numerically trivial we also have
\[ \left(L_f|_Y-L_g|_Y\right)\cdot\left(L_f|_Y+L_g|_Y\right)^{d-1}=0.\]

Additionally, $(\overline L_f-\overline L_g)$ is clearly $(\overline L_f+\overline L_g)$-bounded, and we are nearly in the right setting to apply Theorem \ref{hodgeindex}, except that $(\overline L_f+\overline L_g)$ is nef, but not necessarily arithmetically positive. 

To fix this, we simply adjust the metric by a small positive factor: let $\overline C\in\widehat\Pic(K)$ with $\widehat{\deg}(\overline C)>0$. Replace the pair $\left(\overline L_f-\overline L_g,\overline L_f+\overline L_g\right)$ by $\left(\overline L_f+\overline L_g,\overline L_f+\overline L_g+\overline \pi^*C\right)$. Since $L_f-L_g$ is numerically trivial, the metric on $\overline L_f-\overline L_g$ is flat, so adding $\overline \pi^*C$, which is vertical, does not change the intersection number.
All the conditions of the theorem are now satisfied, so that the theorem tells us
\[ \left(\overline L_f-\overline L_g\right)\in\widehat{\Pic}(K)_{\Q}+\imP{X}_{\Q}.\]

We therefore conclude by Theorem \ref{numericallytrivial} that $h_{\overline L_f}-h_{\overline L_g}$ is a constant height function on $Y$. Since these two heights both take value zero on a dense set in $Z$, they must be equal on $Y$. Thus these heights define the same sets of height zero points, and then by Lemma~\ref{multiplepolarizations}, $Z_f$ and $Z_g$ agree on $Y$, and hence on all of $Z$.
\end{proof}

%%%%%%%%%%%%%%%%%%%%%%%%%%%%%%
%%%%%%%%%%%%%%%%%%%%%%%%%%%%%%
%%%%%%%%%%%%%%%%%%%%%%%%%%%%%%
%%%%%%%%%%%%%%%%%%%%%%%%%%%%%%

\section{Related results and further questions}\label{future}

\subsection{Rigidity of preperiodic points over global function fields}
We first summarize some basic consequences of Theorem~\ref{dynamicsthm} when $K$ is a global function field, particularly in the case when $\Prep(f)\cap\Prep(g)$ is dense in $X$.

\begin{lemma}
Let $K$ be a global function field, and let $f$ and $g$ be two polarized algebraic dynamical systems on a projective variety $X$. Then the following are equivalent:
\begin{enumerate}
\item
$\Prep(f)=\Prep(g)$.
\item
$\Prep(f)\cap\Prep(g)$ is dense in $X$.
\item
$\Prep(f)\subset\Prep(g)$.
\item
$g(\Prep(f))\subset \Prep(f)$.
\end{enumerate}
\end{lemma}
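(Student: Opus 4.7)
The plan is to establish the cyclic chain $(1) \Rightarrow (4) \Rightarrow (3) \Rightarrow (2) \Rightarrow (1)$, treating $(2) \Rightarrow (1)$ as the deep input (Corollary~\ref{dynamicsthmfinite}) and deriving the remaining implications from basic properties of preperiodic points together with Northcott's theorem.

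The implication $(1) \Rightarrow (4)$ is immediate: if $\Prep(f) = \Prep(g)$, then $g(\Prep(f)) = g(\Prep(g)) \subset \Prep(g) = \Prep(f)$. The implication $(2) \Rightarrow (1)$ is exactly Corollary~\ref{dynamicsthmfinite}, which applies because $K$ is a global function field (so $k$ is finite) and the hypothesis is that $\Prep(f) \cap \Prep(g)$ is Zariski dense. For $(3) \Rightarrow (2)$, I would invoke Fakhruddin's classical theorem that the preperiodic points of a polarized endomorphism are Zariski dense in $X$; under the hypothesis $\Prep(f) \subset \Prep(g)$ the intersection equals $\Prep(f)$, which is dense.

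The key step is $(4) \Rightarrow (3)$. Fix $x \in \Prep(f)$. Because $g$ is defined over $K$, the forward orbit $\{g^n(x) : n \ge 0\}$ lies in $X(K(x))$, a field of fixed finite degree over $K$. By hypothesis $(4)$, every $g^n(x)$ lies in $\Prep(f)$, so $h_{\overline L_f}(g^n(x)) = 0$ for all $n$ by Proposition~\ref{canonicalheights}. Since $h_{\overline L_f}$ differs from the Weil height $h_L$ attached to the ample bundle $L$ by a function which is bounded on $X(\overline K)$, the Weil heights $h_L(g^n(x))$ are uniformly bounded. Northcott's theorem over the global function field $K$ then implies that the set of points of $X(K(x))$ of bounded Weil height is finite, so the $g$-orbit of $x$ is finite, i.e.\ $x \in \Prep(g)$. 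This gives $\Prep(f) \subset \Prep(g)$ and closes the cycle.

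The main obstacle is the step $(4) \Rightarrow (3)$, which is the only place where the finite-constant-field hypothesis is genuinely used, and must be carefully orchestrated so that the $g$-orbit remains in one field $K(x)$ of bounded degree in order to invoke Northcott. Everything else is formal, given Corollary~\ref{dynamicsthmfinite} and the Zariski density of $\Prep(f)$.
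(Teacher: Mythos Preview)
Your proposal is correct and follows essentially the same route as the paper: the same cycle $(1)\Rightarrow(4)\Rightarrow(3)\Rightarrow(2)\Rightarrow(1)$, with Fakhruddin for $(3)\Rightarrow(2)$, Corollary~\ref{dynamicsthmfinite} for $(2)\Rightarrow(1)$, and Northcott for $(4)\Rightarrow(3)$. The paper phrases $(4)\Rightarrow(3)$ slightly differently---stratifying $\Prep(f)$ into the finite sets $\Prep(f,d)$ of bounded degree and noting that $g$, being defined over $K$, preserves each stratum---but this is the same Northcott argument you give, just packaged without explicitly passing through the Weil height.
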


\begin{proof}
The equivalence of $(1)$ and $(2)$ is an immediate consequence of Theorem~\ref{dynamicsthm} and the fact that over a global function field, all dynamical height zero points are preperiodic. Clearly $(1)$ implies $(4)$. By Fakhruddin~\cite{Fakhruddin}, $\Prep(f)$ is always dense in $X$, hence $(3)$ implies $(2)$. We now show $(4)$ implies $(3)$.

Stratify $\Prep(f)$ by degree, writing
\[ \Prep(f)=\bigcup_{d\ge0}\Prep(f,d),\]
where
\[ \Prep(f,d):=\left\{x\in\Prep(f)|[K(x):K]\le d\right\}.\]
Since each $\Prep(f,d)$ has height zero and bounded degree, it is finite. Now $(4)$ says that $g$ fixes $\Prep(f)$, but since $g$ is defined over $K$, it fixes each $\Prep(f,d)$ as well. Thus every point of $\Prep(f)$ has finite forward orbit under $g$.
\end{proof}

This lemma suggests two related questions which we do not answer here.
\begin{enumerate}
\item
When is $\Prep(f)$ equal to $\Prep(g)$?
\item
If $\Prep(f)=\Prep(g)$, how closely related must $f$ and $g$ be?
\end{enumerate}
In the case of $f:\P^1\to\P^1$, Mimar~\cite{mimar} gives a variety of partial answers to these questions, with the general implication being that if $f$ and $g$ have the same preperiodic points, their Julia sets must also be very similar. But this is likely very difficult in dimension greater than one.

\subsection{Preperiodic points over larger function fields}
Theorem~\ref{hodgeindex} and most of the proof of Theorem~\ref{dynamicsthm} hold over all transcendence degree one function fields, not just global function fields. But because the Northcott principal fails when $k$ is not a finite field or the algebraic closure of a finite field, we cannot equate height zero points with preperiodic points over arbitrary function fields, and thus Theorem~\ref{dynamicsthm} is a statement about height zero points and not preperiodic points. In this broader setting, however, some things can still be said.

Baker~\cite{bakerisotrivial} proves the following theorem, first proven by Benedetto~\cite{benedetto} in the case of polynomials.

\begin{theorem}
Let $f:\P^1_K\to\P^1_K$ be a rational function of degree $\ge2$, and suppose that $f$ is not isotrivial, in the sense that there exists no finite extension $K'$ of $K$ and M\"obius transformation $M\in \PGL_2(K')$ such that 
\[ f':=M^{-1}\circ f\circ M\]
is defined over $k$. Then
\[ \Prep(f)=Z_f.\]
\end{theorem}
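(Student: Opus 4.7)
The inclusion $\Prep(f)\subseteq Z_f$ is elementary: the functional equation $h_f(f(x))=d\cdot h_f(x)$ with $d=\deg f\ge 2$, together with the boundedness of heights along a finite orbit, forces $h_f\equiv 0$ on preperiodic points. All content is in the reverse inclusion $Z_f\subseteq\Prep(f)$, which I would prove by contradiction. Suppose $x\in Z_f$ has infinite forward orbit; then each iterate again satisfies $h_f(f^n(x))=d^n h_f(x)=0$, producing an infinite set of distinct algebraic points of canonical height zero.

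The plan is to exploit non-isotriviality at a single place. I would use Silverman's moduli space of degree-$d$ self-maps of $\P^1$, a quasi-projective $k$-scheme $M_d$, together with the reformulation that $f$ is isotrivial iff its moduli point $[f]\in M_d(K)$ is a $k$-point. If $f$ is not isotrivial, then the associated rational map $B\dashrightarrow M_d$ is non-constant, so extends to a morphism $B\to\overline{M_d}$ into any projective compactification and must hit the boundary $\overline{M_d}\setminus M_d$. This exhibits a place $v_0$ of genuinely bad reduction, in the sense that no $\PGL_2(\overline{K_{v_0}})$-conjugate of $f$ extends to a degree-$d$ morphism over the valuation ring. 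At such a place the canonical measure $\mu_{f,v_0}$ on the Berkovich line $\P^{1,\an}_{v_0}$ is not a point mass, and the Berkovich Julia set contains repelling periodic type-II points.

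The main step is to apply the non-Archimedean equidistribution theorem (Baker--Rumely, Chambert-Loir, Favre--Rivera-Letelier): for any sequence of algebraic points $x_n$ with $h_f(x_n)\to 0$ and $[K(x_n):K]\to\infty$, the normalized Galois-orbit measures at $v_0$ converge weakly on $\P^{1,\an}_{v_0}$ to $\mu_{f,v_0}$. Applied to a diagonal combination of iterates $f^n(x)$ together with their Galois conjugates (arranged so that the residual degrees tend to infinity), this forces discrete measures on classical type-I points to accumulate on a non-atomic measure supported on the Berkovich Julia set. The main obstacle, and the technical core of Baker's argument, is to convert this weak convergence into a quantitative contradiction: using that $\mu_{f,v_0}$ charges a Berkovich neighborhood of a repelling periodic type-II point, one extracts a uniform positive lower bound on the local canonical height $\lambda_{f,v_0}(f^n(x))$, contradicting the decomposition $h_f(x)=\sum_v\lambda_{f,v}(x)=0$.
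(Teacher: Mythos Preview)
The paper does not prove this theorem; it is quoted as a result of Baker~\cite{bakerisotrivial} (polynomial case: Benedetto~\cite{benedetto}) and used as a black box to deduce Corollary~\ref{bakercor} from Theorem~\ref{dynamicsthm}. So there is no in-paper proof to compare your proposal against, only Baker's original.

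Your outline is in the right spirit but has two genuine gaps. First, you state the equidistribution theorem with the hypothesis $[K(x_n):K]\to\infty$ and then apply it to the orbit $\{f^n(x)\}$ of a single point, whose iterates all have degree bounded by $[K(x):K]$; the parenthetical ``arranged so that the residual degrees tend to infinity'' is precisely what cannot be arranged here. In fact the relevant equidistribution theorems require only genericity and small height, not growing degree, so equidistribution of $\delta_{f^n(x)}$ toward $\mu_{f,v_0}$ does hold --- but then the use you make of the degree hypothesis evaporates. Second, and more seriously, the decisive step is asserted rather than argued: weak convergence of Dirac masses at type~I points to a non-atomic measure on the Berkovich line is not by itself a contradiction (type~I points are dense, so such limits are unconstrained), and the formula $h_f(x)=\sum_v\lambda_{f,v}(x)=0$ is only meaningful after fixing sections, with individual local terms of either sign. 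You yourself flag this step as ``the technical core of Baker's argument,'' which amounts to deferring the proof back to the reference.

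Baker's actual argument does not pass through equidistribution. He works directly at a place $v$ of genuinely bad reduction and, using the discreteness of the value group together with an analysis of the Berkovich dynamics of $f$ at $v$, proves a Northcott-type lower bound: there is $\epsilon>0$ depending only on $f$ and $v$ such that any $P\in\P^1(\overline K)$ of bounded degree with $\hat h_f(P)<\epsilon$ lies in an explicit finite set. The finiteness of $Z_f\cap\P^1(L)$ for each finite $L/K$ then forces any height-zero point to have finite forward orbit.
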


Thus Theorem~\ref{dynamicsthm} proven here immediately implies Corollary~\ref{bakercor}.

In higher dimension isotriviality is less straightforward to classify. When $A$ is an abelian variety, its $K/k$-trace classifies how isotrivial it is, and then the Lang-N\'eron theorem provides a Northcott-like result for the N\'eron-Tate canonical height (the dynamical height induced by $[n]$): height zero points fall into only finitely many cosets of $\Tr_{K/k}(A)(k)\hookrightarrow A(K)$. 

There is no notion of a trace for general varieties, however, and $\iota^{-1}\Tr_{K/k}(\Alb(X))$ is not a sufficient substitute, as $\Alb(X)$ will often be trivial. Chatzidakis and Hrushovski~\cite{chatzidakis1,chatzidakis2} instead use model theory, and a variant of isotriviality called \emph{constructible descent} to $k$. Their theorem generalizes both Baker's result and the Lang-N\'eron Theorem.

\begin{theorem}
Let $K$ be any function field and let $k$ be its field of constants. Let $f:X\to X$ be an algebraic dynamical system defined over $K$, and assume $f$ does not constructibly descend to $k$. Then for every point $x\in X(\overline K)$ with dynamical height zero there exists a proper Zariski closed subset $Y_x\subsetneq X$ such that the orbit of $x$ is contained in $Y_x$.
\end{theorem}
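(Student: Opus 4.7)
The plan is to follow the model-theoretic route pioneered by Chatzidakis and Hrushovski, since the arithmetic Hodge-index methods developed earlier in this paper do not by themselves suffice: they rely on Northcott-type finiteness to convert ``height zero'' into ``finite forward orbit,'' and this conversion is exactly what fails over a function field with infinite constant field. I would begin by interpreting $(X,f)$ inside a sufficiently saturated model of ACFA (the theory of algebraically closed fields with a generic automorphism $\sigma$) containing $\overline K$, so that $f$ becomes the restriction of $\sigma$ to the forward orbit of $x$. A dynamical height zero point $x \in X(\overline K)$ then determines a type $p_x$ over a suitable base; the goal is to show that $p_x$ is analyzable in finitely many one-dimensional minimal types of $\mathrm{ACFA}$.

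Next I would invoke the Chatzidakis--Hrushovski trichotomy for one-dimensional minimal types in ACFA: every such type is either (i) non-orthogonal to the fixed field $\mathrm{Fix}(\sigma)$, (ii) one-based and essentially controlled by the Manin-like $\sigma$-kernel of an abelian variety, or (iii) internal to a finite structure. The non-descent hypothesis on $f$ is designed precisely to rule out (i): constructible descent of $f$ to $k$ is the algebro-geometric avatar of non-orthogonality to $\mathrm{Fix}(\sigma)$. In cases (ii) and (iii), modularity or one-basedness forces nontrivial algebraic dependencies among the iterates $f^n(x)$, so the Zariski closure of the orbit is a proper subvariety $Y_x \subsetneq X$ cut out, essentially, by pullback of a proper subvariety of the controlling abelian variety or finite structure.

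The main obstacle will be bridging the arithmetic condition ``dynamical height zero'' to the model-theoretic smallness condition on $p_x$. Over global function fields Northcott makes this bridge immediate, but in the general setting one has to argue, using the Arakelov machinery of Section~\ref{adelicsubsection}, the admissible metrics of Theorem~\ref{admissible}, and the successive-minima inequality of Proposition~\ref{successiveminima}, that a height-zero orbit cannot carry the full generic type of $X$ unless $f$ itself descends. Making this translation rigorous, and verifying that ``constructibly descends to $k$'' really does coincide with non-orthogonality of the generic type to $\mathrm{Fix}(\sigma)$, is the technical heart of Chatzidakis--Hrushovski's argument and is considerably more delicate than the dichotomy itself.
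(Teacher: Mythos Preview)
The paper does not prove this theorem at all: it is quoted in Section~\ref{future} as a result of Chatzidakis--Hrushovski \cite{chatzidakis1,chatzidakis2}, cited to illustrate what is known beyond the Hodge-index methods of the paper. So there is no ``paper's own proof'' to compare against; the correct response in the paper's context is simply to cite the references.

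That said, your sketch is in the right spirit but has real inaccuracies as a summary of what Chatzidakis--Hrushovski actually do. The embedding you describe --- putting $(X,f)$ inside a model of ACFA so that $f$ is literally the restriction of the generic automorphism $\sigma$ --- is not how the argument runs: $f$ is a fixed algebraic morphism, not a generic automorphism, and one instead studies the difference variety cut out by $\sigma(x)=f(x)$ inside an ambient difference-closed field. The trichotomy is applied to types on such $\sigma$-varieties, and ``constructible descent to $k$'' corresponds to a certain limited-type component being non-orthogonal to the fixed field, not to the generic type of $X$ itself. More importantly, your proposed bridge from ``height zero'' to ``small type'' via the Arakelov machinery of this paper (admissible metrics, successive minima) is not part of their argument and would not work as stated: Chatzidakis--Hrushovski's route goes through a canonical-height characterisation internal to the difference-algebraic setup, and the Hodge-index results proved here play no role. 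If you want to write this up honestly, you should either cite \cite{chatzidakis1,chatzidakis2} as the paper does, or reproduce their actual argument rather than a hybrid that borrows from Section~\ref{dynamics}.
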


The author is optimistic that the methods of arithmetic heights and rigidity theorem of this paper, combined with model-theoretic treatment of isotriviality will yield stronger dynamics results over general function fields in the future.

%\bibliography{references}
%\bibliographystyle{plain}

\end{document}